\newtheorem{theorem}{Theorem}[section]
\newtheorem{lemma}[theorem]{Lemma}
\newtheorem{proposition}[theorem]{Proposition}
\newtheorem{corollary}[theorem]{Corollary}
\theoremstyle{definition}
\newtheorem{definition}[theorem]{Definition}
\newtheorem{remark}[theorem]{Remark}
\DeclareMathOperator{\Ext}{Ext}
\DeclareMathOperator{\Hom}{Hom}
\DeclareMathOperator{\cok}{cok}
\newcommand{\tensor}{\otimes}
\newcommand{\class}[1]{\mathcal{#1}}   
\newcommand{\Z}{\mathbb{Z}}
\newcommand{\Q}{\mathbb{Q/Z}}
\newcommand{\ch}{\textnormal{Ch}(R)}
\newcommand{\chop}{\textnormal{Ch}(R^\circ)}
\newcommand{\cha}[1]{\textnormal{Ch}(\mathcal{#1})}
\newcommand{\dwclass}[1]{dw\widetilde{\class{#1}}}
\newcommand{\exclass}[1]{ex\widetilde{\class{#1}}}
\newcommand{\rightperp}[1]{#1^{\perp}}
\newcommand{\leftperp}[1]{{}^\perp #1}
\newcommand{\homcomplex}{\mathit{Hom}}
\begin{document}

\title{K-flat complexes and derived categories}

\author{James Gillespie}
\address{J.G. \ Ramapo College of New Jersey \\
         School of Theoretical and Applied Science \\
         505 Ramapo Valley Road \\
         Mahwah, NJ 07430\\ U.S.A.}
\email[Jim Gillespie]{jgillesp@ramapo.edu}
\urladdr{http://pages.ramapo.edu/~jgillesp/}

\date{\today}

\keywords{K-flat complex; pure exact; derived category, model category} 

\thanks{2020 Mathematics Subject Classification. 18N40, 18G35, 18G25}

\begin{abstract}
Let $R$ be a ring with identity. Inspired by recent work in~\cite{emmanouil-K-flatness-and-orthogonality-in-homotopy-cats}, we show that the derived category of $R$ is equivalent to the chain homotopy category of all K-flat complexes with pure-injective components. This is implicitly related to a recollement we exhibit. It expresses $\class{D}_{pur}(R)$, the pure derived category of $R$, as an attachment of  the usual derived category $\class{D}(R)$ with Emmanouil's quotient category $\class{D}_{\textnormal{K-flat}}(R):=K(R)/K\textnormal{-Flat}$, which here we call the K-flat derived category. It follows that this Verdier quotient is a compactly generated triangulated category. We obtain our results by using methods of cotorsion pairs to construct (cofibrantly generated) monoidal abelian model structures on the exact category of chain complexes along with the degreewise pure exact structure. In fact, most of our model structures are obtained as corollaries of a general method which associates an abelian model structure to any class of so-called $\class{C}$-acyclic complexes, where $\class{C}$ is any given class of chain complexes. Finally, we also give a new characterization of K-flat complexes in terms of the pure derived category of $R$.
\end{abstract}

\maketitle

\section{Introduction}\label{sec-intro}

Let $R$ be a ring with identity. In the influential paper~\cite{spaltenstein}, Spaltenstein introduced the K-projective, K-injective, and K-flat chain complexes, providing resolutions for unbounded chain complexes. A K-injective chain complex of $R$-modules can be defined as a chain complex $I$ for which the total Hom complex $\homcomplex(E,I)$ is acyclic (exact) whenever $E$ is acyclic. Given any chain complex of $R$-modules, there exists a homology isomorphism $X \xrightarrow{}I$ where $I$ is a K-injective complex. Dually, there is a homology isomorphism $P \xrightarrow{} X$ where $P$ is a K-projective complex. These resolutions allow one to define derived functors for unbounded complexes.  

The existence of K-injective and K-projective resolutions can be expressed in terms of Bousfield localization (\cite{krause-localization theory for triangulated categories, neeman-book}) in the triangulated category $K(R)$, the homotopy category of chain complexes of $R$-modules. There is a $\Hom_{K(R)}$ orthogonal pair $(K_{ac}(R), K\textnormal{-Inj})$, a so-called Bousfield localizing pair, where $K_{ac}(R)$ denotes the full subcategory of $K(R)$ consisting of all acyclic complexes. It means in particular that $\class{D}(R)$, the derived category of $R$, is equivalent to $K\textnormal{-Inj}$, the full subcategory of $K(R)$ consisting of all K-injectives. On the other hand, we also have a Bousfield localizing pair $(K\textnormal{-Proj}, K_{ac}(R))$. All together it leads to a recollement
\[
\xy
(-28,0)*+{K_{ac}(R)};
(0,0)*+{K(R)};
(25,0)*+{\class{D}(R).};
{(-19,0) \ar (-10,0)};
{(-10,0) \ar@<0.5em> (-19,0)};
{(-10,0) \ar@<-0.5em> (-19,0)};
{(10,0) \ar (19,0)};
{(19,0) \ar@<0.5em> (10,0)};
{(19,0) \ar@<-0.5em> (10,0)};
\endxy
\] 

But $K\textnormal{-Flat}$, the class of all K-flat complexes, has not received as much attention. Recently, in~\cite[Theorem~3.4]{emmanouil-K-flatness-and-orthogonality-in-homotopy-cats}, Emmanouil proves the pleasing fact that $(K\textnormal{-Flat},K_{ac}(\class{PI}))$ is a Bousfield localizing pair in $K(R)$, where $K_{ac}(\class{PI})$ is the full subcategory of $K(R)$ consisting of all acyclic complexes that are homotopy equivalent to a complex of pure-injective $R$-modules. 
It means that the K-flat complexes, which are defined as those complexes $X$ for which the chain complex tensor product $- \otimes_R X$ preserves acyclicity, can be defined in terms of $\Hom_{K(R)}$-orthogonality: They are the left $\Hom_{K(R)}$ orthogonal of $K_{ac}(\class{PI})$. Moreover, it means that the Verdier quotient of $K(R)$ by the thick subcategory of all K-flat complexes is equivalent to $K_{ac}(\class{PI})$. 

In this paper, we also study Emmanouil's quotient category $$\class{D}_{\textnormal{K-flat}}(R):=K(R)/K\textnormal{-Flat},$$ which for convenience we call the \emph{K-flat derived category} of $R$. First, we lift the orthogonality in $K(R)$ between $K\textnormal{-Flat}$ and $K_{ac}(\class{PI})$ to the full chain complex category, $\ch$, by constructing a complete cotorsion pair in this category. More accurately, this cotorsion pair is with respect to  the exact category of chain complexes, along with the short exact sequences that are pure exact in each degree. In fact, this cotorsion pair represents an abelian model structure, in the sense of~\cite{hovey}, and its homotopy category recovers $\class{D}_{\textnormal{K-flat}}(R)$. See Theorem~\ref{them-emman-K-flat}. 
We also build a new monoidal abelian model structure for the usual derived category $\class{D}(R)$, see Theorem~\ref{theorem-derived-cat}. Its cofibrant objects are the K-flat complexes, and its fibrant objects are the chain complexes with a pure-injective $R$-module in each degree. The trivial objects in this model structure are the acyclic complexes, so its homotopy category is the usual derived category $\class{D}(R)$. It follows that $\class{D}(R)$ is equivalent to the chain homotopy category of all K-flat complexes with pure-injective components.

In fact, the model structure for $\class{D}(R)$ is the right Bousfield localization (in the model category sense) of a model structure constructed by Stovicek for $\class{D}_{pur}(R)$, the \emph{pure} derived category of $R$, by our model for $\class{D}_{\textnormal{K-flat}}(R)$. More than that, we show in Theorem~\ref{theorem-recollement} that we have a recollement
\[
\xy
(-28,0)*+{\class{D}_{\textnormal{K-flat}}(R)};
(0,0)*+{\class{D}_{pur}(R)};
(25,0)*+{\class{D}(R)};
{(-19,0) \ar (-10,0)};
{(-10,0) \ar@<0.5em> (-19,0)};
{(-10,0) \ar@<-0.5em> (-19,0)};
{(10,0) \ar (19,0)};
{(19,0) \ar@<0.5em> (10,0)};
{(19,0) \ar@<-0.5em> (10,0)};
\endxy
\] which when restricting the first two categories to fibrant objects becomes 
\[
\xy
(-28,0)*+{K_{ac}(\class{PI})};
(0,0)*+{K(\class{PI})};
(25,0)*+{\class{D}(R)};
{(-19,0) \ar (-10,0)};
{(-10,0) \ar@<0.5em> (-19,0)};
{(-10,0) \ar@<-0.5em> (-19,0)};
{(10,0) \ar (19,0)};
{(19,0) \ar@<0.5em> (10,0)};
{(19,0) \ar@<-0.5em> (10,0)};
\endxy
.\] 
From this we conclude that the triangulated category $\class{D}_{\textnormal{K-flat}}(R)$ is compactly generated. 

In Corollary~\ref{cor-K-flat-char} we prove what the author believes is a new characterization of K-flat chain complexes. They are precisely those complexes that are isomorphic in $\class{D}_{pur}(R)$ to a K-projective complex, or even one with projective components (a so-called DG-projective complex).

Our techniques utilize a combination of categorical purity in the category of chain complexes, along with a use of the degreewise pure exact structure on $\ch$. Theorems~\ref{them-C-acyclic-cot-pair} and~\ref{them-C-acyclic-models} may be of interest in their own right. To describe them, suppose we are given \emph{any} class $\class{C}$ of chain complexes, and let ${}_{\class{C}}\class{W}$ be the class of all chain complexes $W$ for which the chain complex tensor product $C \otimes_R W$ is acyclic for all $C \in \class{C}$. The two theorems show that ${}_{\class{C}}\class{W}$ is the left side of a cotorsion pair that is so nice that it represents a model structure on $\ch$ with ${}_{\class{C}}\class{W}$ the class of trivial objects. To prove these theorems, the author builds upon some techniques he learned from his coauthors in~\cite{estrada-gillespie-odabasi}. 

\section{Preliminaries}\label{sec-preliminaries}
Throughout the paper, $R$ denotes a ring with identity, $R$-Mod the category of left $R$-modules, and $\ch$ the category of chain complexes of left $R$-modules. We denote the opposite ring of $R$ by $R^\circ$, and so we sometimes will write $R^\circ$-Mod to denote the category of right $R$-modules, and simlar for $\chop$.
Our convention is that the differentials of our chain complexes lower degree, so $\cdots
\xrightarrow{} X_{n+1} \xrightarrow{d_{n+1}} X_{n} \xrightarrow{d_n}
X_{n-1} \xrightarrow{} \cdots$ is a chain complex.
Given a chain complex $X$, the
\emph{$n^{\text{th}}$ suspension of $X$}, denoted $\Sigma^n X$, is the complex given by
$(\Sigma^n X)_{k} = X_{k-n}$ and $(d_{\Sigma^n X})_{k} = (-1)^nd_{k-n}$.
For a given $R$-module $M$, we denote the \emph{$n$-disk on $M$} by $D^n(M)$. This is the complex consisting only of $M \xrightarrow{1_M} M$ concentrated in degrees $n$ and $n-1$, and 0 elsewhere. We denote the \emph{$n$-sphere on $M$} by $S^n(M)$, and this is the complex consisting only of $M$ in degree $n$ and 0 elsewhere. 

The chain homotopy category of $R$ is denoted $K(R)$. Recall that its objects are also chain complexes but its morphisms are chain homotopy classes of chain maps.

Given two chain complexes $X, Y \in \ch$, the total  $\Hom$ chain complex will be denoted by $\homcomplex(X,Y)$. It is the complex of abelian groups $$\cdots \xrightarrow{} \prod_{k \in
\Z} \Hom_R(X_{k},Y_{k+n}) \xrightarrow{\delta_{n}} \prod_{k \in \Z}
\Hom_R(X_{k},Y_{k+n-1}) \xrightarrow{} \cdots,$$ where $(\delta_{n}f)_{k}
= d_{k+n}f_{k} - (-1)^n f_{k-1}d_{k}$.
Its homology satisfies $H_n[Hom(X,Y)] = K(R)(X,\Sigma^{-n} Y)$.

We recall too the usual tensor product of chain complexes. Given $X \in \chop$ and $Y \in \ch$, their tensor product $X
\otimes_R Y$ is defined by $$(X \otimes_R Y)_n = \bigoplus_{i+j=n} (X_i
\otimes_R Y_j)$$ in degree $n$. The boundary map $\delta_n$ is defined
on the generators by the formula $\delta_n (x \otimes y) = dx \otimes y +
(-1)^{|x|} x \otimes dy$, where $|x|$ is the degree of the element
$x$.
  
 \subsection{The modified Hom and Tensor complexes}\label{subsec-modified hom and tensor}  The above $\homcomplex$ is often referred to as the \emph{internal hom}, for in the case that $R$ is commutative, $\homcomplex(X,Y)$ is again an object of $\ch$. Note that the cycles in degree 0 of the internal hom coincide with the \emph{external hom} functor: $Z_0[\homcomplex(X,Y)] \cong \Hom_{\ch}(X,Y)$. This idea in fact is useful to define an alternate internal hom as follows. Given $X, Y \in \ch$, we define $\overline{\homcomplex}(X,Y)$ to be the complex $$\overline{\homcomplex}(X,Y)_n = Z_n\homcomplex(X,Y)$$ with differential $$\lambda_n : \overline{\homcomplex}(X,Y)_n \xrightarrow{} \overline{\homcomplex}(X,Y)_{n-1}$$ defined by $(\lambda f)_k = (-1)^nd_{k+n}f_k$. Notice that the degree $n$ component of $\overline{\homcomplex}(X,Y)$ is exactly $\Hom_{\ch}(X,\Sigma^{-n}Y)$. In this way we get an internal hom $\overline{\homcomplex}$ which is useful for categorical considerations in $\ch$. For example, $\overline{\homcomplex}(X,-)$ is a left exact functor, and is exact if and only if $X$ is projective in the category $\ch$. On the other hand, $\overline{\homcomplex}(-,Y)$ is exact if and only if $Y$ is injective in $\ch$. 

Similarly, the usual tensor product of chain complexes does not characterize categorical flatness. For this we need the modified tensor product and its left derived torsion functor from~\cite{enochs-garcia-rozas} and~\cite{garcia-rozas}. We will denote it by $\overline{\otimes}$, and it is defined in terms of the usual tensor product $\otimes_R$ as follows. Given a complex $X$ of right $R$-modules and a complex $Y$ of left $R$-modules, we define $X \overline{\otimes} Y$ to be the complex whose $n^{\text{th}}$ entry is $(X \otimes_R Y)_n / B_n(X \otimes_R Y)$ with boundary map  $(X \otimes_R Y)_n / B_n(X \otimes_R Y) \rightarrow (X \otimes_R Y)_{n-1} / B_{n-1}(X \otimes_R Y)$ given by
\[
\overline{x \otimes y} \mapsto \overline{dx \otimes y}.
\]
This defines a complex and we get a bifunctor $ - \overline{\otimes} - $ which is right exact in each variable. 

\subsection{Cotorsion pairs and abelian model structures}\label{subsection-cot-model}
Besides chain complexes, this paper heavily uses standard facts about \emph{cotorsion pairs} and \emph{abelian model categories}. Standard references for cotorsion pairs include~\cite{enochs-jenda-book} and~\cite{trlifaj-book} and the connection to abelian model categories can be found in~\cite{hovey},~\cite{gillespie-exact model structures}, and~\cite{gillespie-hereditary-abelian-models}. Basic language associated to \emph{exact categories}, in the sense of Quillen, from \cite{quillen-algebraic K-theory}, \cite{keller-exact-cats}, and~\cite{buhler-exact categories} will also be used. It may help the reader to know that, because of~\cite[Appendix~B]{gillespie-G-derived}, an exact structure (in the sense of Quillen) on an abelian category is the same thing as a proper class of short exact sequences (in the sense of Mac\,Lane). So on abelian categories, such as $\ch$, there is no difference between an exact model structure in the sense of~\cite{gillespie-exact model structures} and an abelian model structure compatible with a proper class of short exact sequences as in~\cite{hovey}. 


\section{pure exact structures}

Let $R$ be a ring. We will be considering the category $R$-Mod of (left) $R$-modules along with the class $\class{P}ur$ of pure exact sequences. This gives us an exact structure $(R\textnormal{-Mod}, \class{P}ur)$ which we will denote by $R\textnormal{-Mod}_{pur}$. We let $\class{A}$ denote the class of all $R$-modules and $\class{PI}$ denote the class of all pure-injective $R$-modules. It is immediate that we have a cotorsion pair $(\class{A},\class{PI})$ in $R\textnormal{-Mod}_{pur}$. It is known to be a complete cotorsion pair: For example, for any $R$-module $M$, there is a pure embedding $M \xrightarrow{} M^{++} $ where $M^{++} = \Hom_{\Z}(\Hom_{\Z}(M,\Q), \Q)$ is a pure-injective $R$-module~\cite[Prop.~5.3.9]{enochs-jenda-book}.

The exact structure $R\textnormal{-Mod}_{pur}$ lifts to an exact structure on $\ch$, the category of chain complexes. We let $\ch_{pur}$ denote this exact category whose short exact sequences are pure exact sequences of $R$-modules in each degree. We will denote the associated Yoneda Ext group of all (equivalence classes of) degreewise pure short exact sequences by $\Ext^1_{pur}(X,Y)$. The following proposition tells us that we can use $\Ext^1_{pur}$ to construct complete cotorsion pairs in $\ch_{pur}$ by the usual method of cogenerating by a set.

\begin{proposition}\label{prop-cogeneration-by-set}
Let $\class{S}$ be any set (not a proper class) of chain complexes. Then $(\leftperp{(\rightperp{\class{S}})}, \rightperp{\class{S}})$ is a functorially complete cotorsion pair with respect to the degreewise pure exact structure $\ch_{pur}$. Moreover, the class $\leftperp{(\rightperp{\class{S}})}$ consists precisely of direct summands (retracts) of transfinite degreewise pure extensions of complexes in $\class{S}$. 
\end{proposition}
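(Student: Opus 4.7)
The plan is to apply the Eklof--Trlifaj small object argument, in its generalization to exact categories due to Saor\'in--\v{S}\v{t}ov\'\i\v{c}ek. Specifically, I would first verify that the exact category $\ch_{pur}$ is \emph{efficient}, in the sense that guarantees cogeneration by a set produces a functorially complete cotorsion pair. The three things to check are: (i) the admissible monomorphisms (degreewise pure monics) are closed under transfinite composition and under pushout along arbitrary chain maps; (ii) $\ch_{pur}$ has a generator; and (iii) every object is small relative to the class of admissible monomorphisms. Each of these reduces degreewise to the corresponding well-known statements for $R\textnormal{-Mod}_{pur}$: pure short exact sequences of $R$-modules are closed under transfinite compositions and pushouts, and every $R$-module is $\kappa$-small with respect to pure monomorphisms for a suitable cardinal $\kappa$ depending on the module, because pure monomorphisms are filtered colimits of split monomorphisms. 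The generator issue is handled by noting that $\ch$ is locally finitely presentable with the usual set of generators $\{D^n(R), S^n(R)\}_{n \in \Z}$, all of which are cofibrant in the pure structure.

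Once efficiency is in hand, I would apply Quillen's small object argument to the set
\[
I_{\class{S}} = \{\, 0 \to Y \to Z \to S \to 0 \mid S \in \class{S}\,\}
\]
of representatives of degreewise pure short exact sequences with quotient in $\class{S}$. (It is genuinely a set, not a proper class, since $\Ext^1_{pur}(S,-)$-classes are controlled by the smallness of $S$.) The output is a functorial factorization of the map $0 \to X$ as a degreewise pure monomorphism $0 \to X \to X'$ whose cokernel is a transfinite pure extension of complexes in $\class{S}$, followed by $X' \to 0$ with $X' \in \rightperp{\class{S}}$. This is one of the two functorial factorizations required for completeness; the other is obtained dually by the usual salmon-style argument, pushing out the first factorization along a given map.

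For the description of $\leftperp{(\rightperp{\class{S}})}$ as summands of transfinite degreewise pure extensions of $\class{S}$, I would invoke Eklof's lemma for the exact category $\ch_{pur}$: the class $\rightperp{\class{S}}$ is closed under transfinite compositions of admissible monomorphisms whose cokernels are admissible extensions by objects of $\class{S}$. Hence any such transfinite extension lies in $\leftperp{(\rightperp{\class{S}})}$, as do its summands. Conversely, given $X \in \leftperp{(\rightperp{\class{S}})}$, the factorization produced above yields a degreewise pure short exact sequence $0 \to X \to Y \to Z \to 0$ with $Y$ a transfinite pure extension of complexes in $\class{S}$ and $Z \in \rightperp{\class{S}}$; the sequence splits by definition of $X$, exhibiting $X$ as a summand of $Y$.

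I expect the main technical hurdle to be the efficiency verification, specifically the smallness axiom in the exact-categorical sense: one must check that the relevant colimits of degreewise pure monomorphisms are again pure in each degree, so that the standard transfinite induction of the small object argument actually terminates inside $\ch_{pur}$. This reduces to the fact that pure monomorphisms in $R\textnormal{-Mod}$ are closed under $\lambda$-directed colimits indexed by a regular cardinal $\lambda$ larger than the presentability rank of each object involved, which is a standard fact about the pure exact structure on a locally presentable abelian category.
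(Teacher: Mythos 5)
Your overall plan --- verify that $\ch_{pur}$ is efficient in the Saor\'in--\v{S}\v{t}ov\'{\i}\v{c}ek sense and then run the small object argument, Eklof's lemma, and Salce's trick --- is the right machinery; indeed the paper disposes of this proposition by citing \cite[Prop.~5.4]{gillespie-G-derived}, which is proved by exactly these techniques. But two of your steps fail as written. The first is your generator. Efficiency requires a generator \emph{for the exact structure}: every complex must receive an admissible epimorphism, i.e.\ a degreewise \emph{pure} epimorphism, from a coproduct of copies of it. The set $\{D^n(R), S^n(R)\}$ generates the abelian category $\ch$ but not the exact category $\ch_{pur}$: a pure quotient of a flat module is flat, so if every complex admitted a degreewise pure epimorphism from a direct sum of disks and spheres on $R$, then every $R$-module would be flat and $R$ would be von Neumann regular. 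The correct generators are the disks $D^n(G_i)$ on a representative set $\{G_i\}$ of \emph{all finitely presented} $R$-modules; these are moreover projective in $\ch_{pur}$, since pure exact sequences ending in a finitely presented module split. This is not cosmetic: deflations from these generators are what make Salce's trick (your ``salmon-style argument'') run, and they also repair your generating family $I_{\class{S}}$, which as written ranges over all $Y$ and is therefore a proper class; one instead takes, for each $S \in \class{S}$, a single chosen inflation $K_S \rightarrow P_S$ with $P_S$ a coproduct of the projective generators and $P_S/K_S \cong S$. Identifying the finitely presented modules as the generators is precisely the reduction the paper makes in invoking \cite[Remark~2 and Section~4.1]{gillespie-G-derived}.

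The second gap is the retract argument for the inclusion of $\leftperp{(\rightperp{\class{S}})}$ into the summands of filtered objects. Your small object argument produces, for any $X$, a sequence $0 \to X \to X' \to F \to 0$ with the right-class object $X'$ in the \emph{middle} and the $\class{S}$-filtered object $F$ as cokernel; yet you then quote a sequence $0 \to X \to Y \to Z \to 0$ with $Y$ filtered and $Z \in \rightperp{\class{S}}$ and assert it splits ``by definition of $X$.'' Neither half of this is right: no such sequence has been constructed, and even granting one, splitting it requires $\Ext^1_{pur}(Z,X) = 0$, whereas the orthogonality you actually have, namely $X \in \leftperp{(\rightperp{\class{S}})}$ and $Z \in \rightperp{\class{S}}$, gives only $\Ext^1_{pur}(X,Z) = 0$ --- the wrong variance. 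The correct argument is Salce's trick: choose a deflation $0 \to K \to P \to X \to 0$ with $P$ a coproduct of the projective generators, take the special preenvelope $0 \to K \to W \to F \to 0$ with $W \in \rightperp{\class{S}}$ and $F$ filtered, and push out to obtain $0 \to W \to Y \to X \to 0$, where $Y$ is an extension of $F$ by $P$ and hence a transfinite degreewise pure extension of complexes in $\class{S}$ together with disks $D^n(G_i)$. Now $\Ext^1_{pur}(X,W) = 0$ does split this sequence, exhibiting $X$ as a retract of $Y$. Note that the filtration unavoidably involves the projective generators alongside $\class{S}$ (this is how the cited result handles it, and it is harmless in the paper's applications, where $\class{S}$ contains all small contractible complexes) --- yet another place where getting the generator right matters.
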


\begin{proof}
This is a special instance of~\cite[Prop.~5.4]{gillespie-G-derived} and the ``Remark 2'' that follows it. (The functors denoted by $\textnormal{G-}\Ext^1_{\cha{G}}$ in the first paragraph of that proof should really be denoted 
$\textnormal{G-}\Ext^1_{\class{G}}$, as in the paragraph before~\cite[Lemma~4.2]{gillespie-G-derived}.) The category $\cha{G}_G$ of~\cite[Remark 2]{gillespie-G-derived} is discussed in~\cite[Section~4.1]{gillespie-G-derived} and yields $\ch_{pur}$ when the set of generators $\{G_i\}$ is a representative set of all finitely presented $R$-modules.  
\end{proof}

\subsection{$\class{C}$-acyclic complexes}

We will prove a very general theorem and apply it to partially recover a result of Stovicek, see Theorem~\ref{them-stovicek-pure}, and also to obtain a cotorsion pair version of Emmanouil's result that the K-flats are the left side of a Bousfield localizing pair. To do so, we make the following definition. 

\begin{definition}\label{def-C-acyclic}
Let $\class{C}$ be a fixed class of chain complexes of (right) $R$-modules. We will say that a chain complex $W$ of (left) $R$-modules is \emph{$\class{C}$-acyclic} if $C \tensor_R W$ is acyclic for all $C \in \class{C}$. We let ${}_\class{C}\class{W}$ denote the class of all $\class{C}$-acyclic complexes $W$.
\end{definition}

For examples, when $\class{C}$ is the class of all complexes, then the $\class{C}$-acyclic complexes are the pure acyclic complexes; see Definition~\ref{def-k-flat}. If $\class{C}$ is the class of all acyclic complexes, then the $\class{C}$-acyclic complexes are Spaltenstein's K-flat complexes from~\cite{spaltenstein} . And, the $\{S^0(R)\}$-acyclic complexes are the usual acyclic complexes. 
These three examples will play the central role in the next section.

\begin{proposition}\label{prop-C-acyclic-properties}
Let $\class{C}$ be a fixed class of chain complexes of (right) $R$-modules. Then the class ${}_\class{C}\class{W}$ of all $\class{C}$-acyclic complexes satisfies the following properties.
\begin{enumerate}
\item ${}_\class{C}\class{W}$ is closed under direct sums, direct summands (retracts), and direct limits. Also, ${}_\class{C}\class{W}$ contains all contractible complexes. 
\item ${}_\class{C}\class{W}$ is thick in $\ch_{pur}$. That is, it satisfies the 2 out of 3 property with respect to short exact sequences of chain complexes that are pure exact in each degree.
\item ${}_\class{C}\class{W}$ is closed under transfinite extensions in $\ch_{pur}$. That is, if $X$ can be written as a transfinite extension $X \cong \varinjlim_{\alpha < \lambda} X_{\alpha}$ where each $X_{\alpha} \xrightarrow{} X_{\alpha +1}$ is a degreewise pure monomorphism, and $X_0$ and each $X_{\alpha +1}/X_{\alpha}$ are $\class{C}$-acyclic, then $X$ too is $\class{C}$-acyclic. 
\end{enumerate}
\end{proposition}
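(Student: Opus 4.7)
The plan is to leverage one central observation: a degreewise pure short exact sequence of chain complexes, when tensored over $R$ with any complex $C$ of right $R$-modules, remains short exact in each degree. This is immediate, because the definition of purity of $A_j \hookrightarrow B_j$ says precisely that $C_i \tensor_R A_j \hookrightarrow C_i \tensor_R B_j$ is injective for every $R$-module $C_i$, and then taking the degree-$n$ component $\bigoplus_{i+j=n} C_i \tensor_R A_j \hookrightarrow \bigoplus_{i+j=n} C_i \tensor_R B_j$ preserves this. Consequently the short exact sequence of $C \tensor_R -$ applied to a degreewise pure SES induces a long exact sequence in homology. This is the single engine that drives all three parts.

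For part (1), I would check each closure property by exploiting that the chain-complex tensor product commutes with arbitrary direct sums and with direct limits, and that homology commutes with both operations. So if every $C \tensor_R W_i$ is acyclic, then $C \tensor_R \bigoplus_i W_i \cong \bigoplus_i (C \tensor_R W_i)$ is acyclic, and similarly for filtered colimits $\varinjlim W_i$. Summand-closedness follows since a retract of an acyclic complex is acyclic. For contractibility, if $W$ has a contracting homotopy $s$, then $1_C \tensor s$ is a contracting homotopy for $C \tensor_R W$, hence $C \tensor_R W$ is contractible and in particular acyclic.

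For part (2), given a degreewise pure short exact sequence $0 \to X' \to X \to X'' \to 0$ in $\ch$ and any $C \in \class{C}$, the engine gives a short exact sequence of complexes of abelian groups $0 \to C \tensor_R X' \to C \tensor_R X \to C \tensor_R X'' \to 0$, and hence a long exact sequence in homology. Reading the 2-out-of-3 property off of this long exact sequence immediately yields thickness of ${}_\class{C}\class{W}$ in $\ch_{pur}$.

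For part (3), I would use transfinite induction on $\alpha \le \lambda$ to show each $X_\alpha \in {}_\class{C}\class{W}$, with $X = X_\lambda$ giving the conclusion. The base case $\alpha = 0$ is a hypothesis. The successor step uses the degreewise pure short exact sequence $0 \to X_\alpha \to X_{\alpha+1} \to X_{\alpha+1}/X_\alpha \to 0$ together with part (2). The limit step uses that $X_\alpha = \varinjlim_{\beta<\alpha} X_\beta$ and invokes the direct-limit closure in part (1). The main obstacle, insofar as there is one, is just making sure the limit step genuinely realizes $X_\alpha$ as a filtered colimit of the $X_\beta$ (which is built into what ``transfinite extension'' means), and matching the pureness hypothesis to what is needed to apply (2) at successor stages; both are essentially bookkeeping given the engine above.
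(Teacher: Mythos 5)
Your proposal is correct and takes essentially the same approach as the paper: the central ``engine'' you identify --- that a degreewise pure short exact sequence remains short exact after applying $C \otimes_R -$, checked in each degree via module-level purity and then summed over $i+j=n$ --- is exactly the paper's argument for thickness, and your treatment of parts (1) and (3) (tensor and homology commute with sums and direct limits; transfinite induction using (2) at successors and (1) at limits) matches the paper's reasoning. The only cosmetic difference is in the contractibility claim of part (1), where you tensor a contracting homotopy $s$ to get one on $C \otimes_R W$ (valid, with the usual Koszul sign), while the paper instead writes a contractible complex as $\bigoplus_{n} D^n(M_n)$ and checks each summand; both are standard.
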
 
 
\begin{proof}
The class of exact complexes of abelian groups is closed under direct sums, direct summands, and direct limits. For any chain complex $C$, the functor $C \otimes_R -$ commutes with direct sums, direct summands, and direct limits. So ${}_\class{C}\class{W}$ is closed under these operations. As for the contractible complexes being $\class{C}$-acyclic, recall that any contractible complex $X$ must take the form $X \cong \bigoplus_{n\in\Z}D^n(M_n)$ for some modules $\{M_n\}_{n\in\Z}$. So then we have  $C \otimes_R X \cong \bigoplus_{n\in\Z} (C\otimes_R D^n(M_n))$, and each $C\otimes_R D^n(M_n)$ can be shown to be acyclic. For example, see~\cite[Exercise~1.2.5]{weibel}.

Now let $C \in \class{C}$, and consider a short exact sequence of chain complexes 
$$0 \xrightarrow{} W \xrightarrow{} X \xrightarrow{} Y \xrightarrow{} 0,$$ pure exact in each degree. Then for all pairs of integers $i, j$ we have short exact sequences
$$0 \xrightarrow{} C_i \tensor_R W_j \xrightarrow{}C_i \tensor_R X_j \xrightarrow{} C_i \tensor_R Y_j \xrightarrow{} 0.$$
Since short exact sequences are closed under direct sums, it follows that for each $n \in \Z$ we have short exact sequences
$$0 \xrightarrow{} \bigoplus_{i+j=n}  C_i \tensor_R W_j \xrightarrow{} \bigoplus_{i+j=n} C_i \tensor_R X_j \xrightarrow{} \bigoplus_{i+j=n} C_i \tensor_R Y_j \xrightarrow{} 0.$$
By definition, this is the degree $n$ component of the tensor product, so we get a short exact sequence of chain complexes
$$0 \xrightarrow{} C \tensor_R W \xrightarrow{} C \tensor_R X \xrightarrow{} C \tensor_R Y \xrightarrow{} 0.$$
So then if 2 out of 3 of these are acyclic, so is the third. This shows ${}_\class{C}\class{W}$ satisfies the 2 out of 3 property on short exact sequences. 

So now it is easy to see that ${}_\class{C}\class{W}$ is closed under transfinite extensions in $\ch_{pur}$. This follows from the fact that we have shown ${}_\class{C}\class{W}$ to be closed under degreewise pure extensions, and under direct limits. 
\end{proof} 
 
\subsection{Categorical purity}

We've been considering the exact category $\ch_{pur}$. It is the category of chain complexes along with the proper class of all degreewise pure exact sequences of $R$-modules. But there is also the (stronger) categorical notion of purity in the full abelian category $\ch$, of chain complexes or $R$-modules. We are referring to the categorical tensor product $X\overline{\otimes}\, Y$ introduced in~\cite{enochs-garcia-rozas}, studied in~\cite{garcia-rozas}, and used in~\cite[Section~4]{gillespie}. See Section~\ref{subsec-modified hom and tensor} for its definition.

\begin{definition}\label{def-purity}
A short exact sequence $0 \xrightarrow{} P \xrightarrow{} X \xrightarrow{} Q \xrightarrow{} 0$ is called \emph{categorically pure} if for any chain complex $Y$ of (right) $R$-modules, the sequence 
$$0 \xrightarrow{} Y\overline{\otimes}\,P \xrightarrow{} Y\overline{\otimes}\,X \xrightarrow{} Y\overline{\otimes}\,Q \xrightarrow{} 0$$ remains exact. 
\end{definition}
 
This is indeed the categorical purity in  $\ch$ as such short exact sequences are precisely the direct limits of split exact sequences ending in a finitely presented chain complex. This and other basic properties of categorical purity are listed in~\cite[Prop.~4.4]{gillespie}. 

\begin{lemma}\label{lemma-categorical-degreewise-purity}
Any categorical  pure exact sequence is a degreewise pure exact sequence. That is, given any pure exact sequence as in Definition~\ref{def-purity}, the short exact sequence
$$0 \xrightarrow{} P_n \xrightarrow{} X_n \xrightarrow{} Q_n \xrightarrow{} 0$$
 is pure exact in $R$-Mod for each $n$.
\end{lemma}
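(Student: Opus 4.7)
The plan is to use the disk complexes $D^0(M)$, for right $R$-modules $M$, as ``probes'' that reduce the categorical tensor product $\overline{\otimes}$ to the naive degreewise tensor product. Specifically, I would first establish a natural isomorphism of chain complexes of abelian groups
\[
\Phi : D^0(M) \,\overline{\otimes}\, X \xrightarrow{\;\sim\;} M \otimes_R X,
\]
where $M \otimes_R X$ on the right denotes the complex with $n$th component $M \otimes_R X_n$ and differential $\pm(1_M \otimes d)$. Given $\Phi$, the lemma is immediate: applying $D^0(M) \,\overline{\otimes}\, -$ to a categorically pure short exact sequence $0 \to P \to X \to Q \to 0$ yields an exact sequence of chain complexes of abelian groups, and $\Phi$ identifies this with $0 \to M \otimes_R P \to M \otimes_R X \to M \otimes_R Q \to 0$. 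Exactness in the latter is tested in each degree, so $0 \to M \otimes_R P_n \to M \otimes_R X_n \to M \otimes_R Q_n \to 0$ is exact for every integer $n$; since $M$ is an arbitrary right $R$-module, each $0 \to P_n \to X_n \to Q_n \to 0$ is pure exact in $R$-Mod.

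To build $\Phi$, I would unravel the definitions. Writing $D^0(M)$ as $M \xrightarrow{1_M} M$ concentrated in degrees $0$ and $-1$, a direct computation gives
\[
(D^0(M) \otimes_R X)_n = (M \otimes_R X_n) \oplus (M \otimes_R X_{n+1}),
\]
with differential $(a,b) \mapsto ((1_M \otimes d)(a),\, a - (1_M \otimes d)(b))$. The assignment $(a,b) \mapsto a - (1_M \otimes d)(b)$ vanishes on the boundary subgroup $B_n(D^0(M) \otimes_R X)$ (using $d^2 = 0$) and descends to a bijection onto $M \otimes_R X_n$; surjectivity is witnessed by the class of $(u,0)$, and injectivity is immediate from the form of a boundary. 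Comparing this against the differential $\overline{x \otimes y} \mapsto \overline{dx \otimes y}$ on $D^0(M) \,\overline{\otimes}\, X$ confirms that $\Phi$ is an isomorphism of complexes, intertwining the two differentials up to a uniform sign that does not affect exactness.

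The only real obstacle is the bookkeeping required to verify that the formula $(a,b) \mapsto a - (1_M \otimes d)(b)$ is well-defined on the quotient by $B_n$ and injective there; once $\Phi$ is in hand, the proof of the lemma is formal. As an alternative route, one could appeal to the characterization noted in the paragraph preceding the lemma, namely that categorically pure sequences in $\ch$ are filtered direct limits of split short exact sequences, since split sequences are degreewise split and degreewise purity is preserved under filtered direct limits; but the disk-complex argument has the virtue of staying entirely within the tensor-based definition of categorical purity used in the paper.
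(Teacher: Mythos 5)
Your proof is correct, but it takes a genuinely different route from the paper's. The paper argues on the Hom side: to test purity of $0 \to P_n \to X_n \to Q_n \to 0$ it takes a finitely presented module $F$, uses the adjunction $\Hom_R(F,Y_n) \cong \Hom_{\ch}(D^n(F),Y)$, and then quotes the characterization (\cite[Prop.~4.4(3)]{gillespie}) of categorically pure sequences as those preserved by $\Hom_{\ch}(L,-)$ for every finitely presented complex $L$. You instead stay on the tensor side, probing with $D^0(M)$ for an \emph{arbitrary} right module $M$ and establishing the isomorphism $D^0(M)\,\overline{\otimes}\,X \cong M \otimes_R X$ (your computation checks out: the map $(a,b) \mapsto a - (1_M\otimes d)(b)$ kills $B_n$, is bijective on the quotient, and intertwines the differentials up to the uniform sign $-1$, which is harmless since only degreewise exactness of the resulting sequence of complexes is needed). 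Your argument is the precise tensor-dual of the paper's: your disk-tensor isomorphism plays the role of the paper's disk-Hom adjunction. What each buys: the paper's proof is shorter but leans on the cited equivalence between the $\overline{\otimes}$-definition of categorical purity and the finitely-presented-complex Hom characterization; yours works directly from Definition~\ref{def-purity} as stated, needing only the standard tensor criterion for purity in $R$-Mod, so it is self-contained modulo routine bookkeeping (including the easy naturality of $\Phi$ in $X$, which you should state explicitly since the three isomorphisms for $P$, $X$, $Q$ must be compatible with the maps of the sequence). Your alternative sketch via filtered colimits of split sequences is also sound--split sequences are degreewise split, and degreewise purity passes to direct limits--and is in fact closest in spirit to the remark the paper makes just before the lemma.
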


\begin{proof}
Let $F$ be a finitely presented $R$-module. It is enough to show that the functor $\Hom_R(F,-)$ preserves the displayed short exact sequence for each $n$. By a standard adjunction, see~\cite[Lemma~3.1(1)]{gillespie}, it is equivalent to show that $\Hom_{\ch}(D^n(F),-)$ preserves the former sequence from Definition~\ref{def-purity}. But this follows from~\cite[Prop~4.4(3)]{gillespie}; categorically pure exact seqeuences are characterized by the condition that $\Hom_{\ch}(L,-)$ preserves their exactness whenever $L$ is a finitely presented chain complex. 
\end{proof}
 
\begin{proposition}\label{prop-categorical-pure-subcomplexes-quotients}
The class ${}_\class{C}\class{W}$ of all $\class{C}$-acyclic complexes is closed under categorically pure subcomplexes and quotients. That is, 
for any categorically pure short exact sequence  $0 \xrightarrow{} P \xrightarrow{} W \xrightarrow{} Q \xrightarrow{} 0$, the complexes $P$ and $Q$ must both be $\class{C}$-acyclic whenever $W$ is $\class{C}$-acyclic. 
\end{proposition}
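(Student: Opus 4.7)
The plan is to exploit the direct limit characterization of categorical purity. Specifically, by~\cite[Prop.~4.4]{gillespie}, a short exact sequence in $\ch$ is categorically pure precisely when it can be written as a filtered colimit of split short exact sequences whose rightmost term is finitely presented. I would begin by writing $0 \to P \to W \to Q \to 0$ as $\varinjlim_\lambda (0 \to P_\lambda \to X_\lambda \to Q_\lambda \to 0)$, where each row in the directed system is split.

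Next I would fix any $C \in \class{C}$ and apply $C \otimes_R -$ to this directed system. Since $C \otimes_R -$ is additive, it sends split short exact sequences to split short exact sequences, yielding a directed system of split short exact sequences
$0 \to C \otimes_R P_\lambda \to C \otimes_R X_\lambda \to C \otimes_R Q_\lambda \to 0$
of chain complexes of abelian groups. The payoff of splitness is that $H_n$ now preserves exactness at each level of the system, producing a directed system of short exact sequences of abelian groups. Taking the filtered colimit---which is exact and commutes with both $C \otimes_R -$ and $H_n$---gives
\[ 0 \to H_n(C \otimes_R P) \to H_n(C \otimes_R W) \to H_n(C \otimes_R Q) \to 0. \]
Because $W$ is $\class{C}$-acyclic, the middle term vanishes, forcing both outer terms to vanish as well. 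Since $n$ and $C$ were arbitrary, this gives $P, Q \in {}_\class{C}\class{W}$.

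The main obstacle to a more direct argument is that $H_n$ applied to a general short exact sequence of chain complexes yields only a long exact sequence. Lemma~\ref{lemma-categorical-degreewise-purity} alone would give the short exact sequence $0 \to C \otimes_R P \to C \otimes_R W \to C \otimes_R Q \to 0$ of chain complexes, and the long exact sequence in homology would then merely relate $H_n(C \otimes_R P)$ and $H_n(C \otimes_R Q)$ via a connecting morphism, without forcing either to vanish. The essential point is to invoke the stronger structure supplied by categorical purity---that it realizes the sequence as a filtered colimit of \emph{split} ones---so that $H_n$ may be applied before taking the colimit, producing a genuine short exact sequence in homology at the limit.
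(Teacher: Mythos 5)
Your proof is correct, but it takes a genuinely different route from the paper's. The paper never invokes the filtered-colimit characterization of categorical purity in its proof; instead it works directly with the two tensor products. By Lemma~\ref{lemma-categorical-degreewise-purity} the sequence is degreewise pure, so applying $C \tensor_R -$ gives a short exact sequence of complexes; by the very definition of categorical purity, applying $C \overline{\otimes} -$ gives a second short exact sequence, namely of the quotients $(C \tensor_R -)_n / B_n(C \tensor_R -)$. Comparing these two via three successive applications of the snake lemma produces short exact sequences of boundaries, then of cycles, and finally of homology groups, at which point vanishing of the middle terms forces the outer terms to vanish. Your argument replaces the diagram chases with the structural theorem (stated by the paper right after Definition~\ref{def-purity} and attributed to \cite[Prop.~4.4]{gillespie}) that categorically pure sequences in $\ch$ are precisely filtered colimits of split exact sequences: additivity of $C \tensor_R -$ and of $H_n$ preserves split exactness at each level, and exactness of filtered colimits of abelian groups, together with the fact that both $C \tensor_R -$ and $H_n$ commute with filtered colimits, yields the same short exact sequence in homology. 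What your route buys is brevity and conceptual transparency; what it costs is reliance on the nontrivial equivalence between the $\overline{\otimes}$-exactness definition of purity and the split-colimit description, whereas the paper's route is self-contained given only that definition plus the degreewise-purity lemma. Your closing remark correctly diagnoses why degreewise purity alone is insufficient---the long exact sequence obstruction is exactly what the paper's snake-lemma argument, using the extra exactness of $C \overline{\otimes} -$, is designed to overcome.
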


\begin{proof}
By Lemma~\ref{lemma-categorical-degreewise-purity}, the given sequence is an admissible short exact sequence in $\ch_{pur}$, that is, it is degreewise pure. So given \emph{any} complex $C$, it follows, just like in the proof of Proposition~\ref{prop-C-acyclic-properties}, that we get a short exact sequence for each $n$:
\begin{equation}\label{equation-ses1}\tag{$\dagger$} 0 \xrightarrow{} (C \tensor_R P)_n \xrightarrow{} (C \tensor_R W)_n \xrightarrow{} (C \tensor_R Q)_n \xrightarrow{} 0. \end{equation}  
But also, for \emph{any} complex $C$, we get a short exact sequence $$0 \xrightarrow{} C\overline{\otimes}\,P \xrightarrow{} C\overline{\otimes}\,W \xrightarrow{} C\overline{\otimes}\,Q \xrightarrow{} 0.$$
By the very definition of $\overline{\otimes}$, this means we have another short exact sequence for each $n$:
\begin{equation}\label{equation-ses2}\tag{$\dagger\dagger$} 0 \xrightarrow{} \frac{(C \tensor_R P)_n}{B_n(C \tensor_R P)} \xrightarrow{} \frac{(C \tensor_R W)_n}{B_n(C \tensor_R W)} \xrightarrow{} \frac{(C \tensor_R Q)_n}{B_n(C \tensor_R Q)} \xrightarrow{} 0. \end{equation}
There is an epimorphism from the short exact sequence \eqref{equation-ses1}, to the short exact sequence \eqref{equation-ses2}.
So by the snake lemma we get another short exact sequence for each $n$:
\begin{equation}\label{equation-ses3}\tag{$*$} 0 \xrightarrow{} B_n(C \tensor_R P) \xrightarrow{} B_n(C \tensor_R W) \xrightarrow{} B_n(C \tensor_R Q) \xrightarrow{} 0. \end{equation}  
Now in the same way, there is an epimorphism from the short exact sequence \eqref{equation-ses1}, to the short exact sequence \eqref{equation-ses3} in degree $n-1$, giving us another short exact sequence for each $n$:
\begin{equation}\label{equation-ses4}\tag{$**$} 0 \xrightarrow{} Z_n(C \tensor_R P) \xrightarrow{} Z_n(C \tensor_R W) \xrightarrow{} Z_n(C \tensor_R Q) \xrightarrow{} 0. \end{equation} 
Finally, we apply the snake lemma one more time: There is a monomorphism from the short exact sequence \eqref{equation-ses3} to the short exact sequence \eqref{equation-ses4}. Its cokernel is the short exact sequence of homology:
$$0 \xrightarrow{} H_n(C \tensor_R P) \xrightarrow{} H_n(C \tensor_R W) \xrightarrow{} H_n(C \tensor_R Q) \xrightarrow{} 0.$$ 
In the case that $C \in \class{C}$ and $W$ is $\class{C}$-acylic, it means $H_n(C \tensor_R W) = 0$. So the above short exact sequence implies $H_n(C \tensor_R P) = H_n(C \tensor_R Q) = 0$ too. In other words, $P$ and $Q$ are each $\class{C}$-acyclic whenever $W$ is $\class{C}$-acyclic. 
\end{proof}
 
\subsection{$\class{C}$-acyclic cotorsion pairs} For the following theorems we continue to let $\class{C}$ denote a fixed class of chain complexes of (right) $R$-modules and ${}_\class{C}\class{W}$ the class of all $\class{C}$-acyclic complexes. So again, $W \in {}_\class{C}\class{W}$ means that the chain complex tensor product $C \otimes_R W$ is acyclic for all $C \in \class{C}$. We set $\class{F} :=\rightperp{{}_\class{C}\class{W}}$, in $\ch_{pur}$. That is, 
 $$\class{F} = \{\,F \in \ch\,|\, \Ext^1_{pur}(W,F)=0\  \forall \,W \in \,{}_\class{C}\class{W}\},$$ and we recall that $\Ext^1_{pur}(W,F)$ is the group of degreewise pure  (not necessarily categorically pure) short exact sequences.
 
 \begin{theorem}\label{them-C-acyclic-cot-pair} 
$({}_\class{C}\class{W}, \class{F})$ is a complete hereditary cotorsion pair, cogenerated by a set, in the exact category $\ch_{pur}$. 
 \end{theorem}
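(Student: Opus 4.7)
The plan is to apply Proposition~\ref{prop-cogeneration-by-set} with a carefully chosen set $\class{S} \subseteq {}_\class{C}\class{W}$, and then verify that the cotorsion pair cogenerated by $\class{S}$ coincides with $({}_\class{C}\class{W}, \class{F})$. The overall strategy follows a Stovicek-style deconstructibility argument: exploit that $\ch$ is a locally finitely presentable Grothendieck category so that the categorical-purity closure from Proposition~\ref{prop-categorical-pure-subcomplexes-quotients} together with Lemma~\ref{lemma-categorical-degreewise-purity} can be converted into a degreewise pure transfinite filtration of any $\class{C}$-acyclic complex by bounded $\class{C}$-acyclic complexes.

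The principal obstacle is the following deconstructibility claim, which is the heart of the argument: there is a regular cardinal $\kappa$ such that every $W \in {}_\class{C}\class{W}$ admits a continuous chain $0 = W_0 \subseteq W_1 \subseteq \cdots \subseteq W_\lambda = W$ of categorically pure subcomplexes whose successive quotients $W_{\alpha+1}/W_\alpha$ are $\kappa$-presentable. The cardinal $\kappa$ is obtained from standard locally-presentable-category machinery, and the filtration is produced by a Hill-lemma style construction relative to categorical purity in $\ch$. Proposition~\ref{prop-categorical-pure-subcomplexes-quotients} then guarantees that each $W_\alpha$ is $\class{C}$-acyclic, and the 2-out-of-3 property established in Proposition~\ref{prop-C-acyclic-properties} ensures each layer $W_{\alpha+1}/W_\alpha$ is $\class{C}$-acyclic as well. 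Lemma~\ref{lemma-categorical-degreewise-purity} upgrades each inclusion to a degreewise pure monomorphism, so $W$ is realized as a transfinite degreewise pure extension of $\kappa$-presentable objects of ${}_\class{C}\class{W}$. I would then take $\class{S}$ to be a set of representatives for all $\kappa$-presentable $\class{C}$-acyclic chain complexes.

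Once $\class{S}$ is in hand, Proposition~\ref{prop-cogeneration-by-set} immediately delivers a functorially complete cotorsion pair $(\leftperp{(\rightperp{\class{S}})}, \rightperp{\class{S}})$ in $\ch_{pur}$, whose left class consists precisely of retracts of transfinite degreewise pure extensions of complexes in $\class{S}$. Since ${}_\class{C}\class{W}$ is closed under retracts and transfinite degreewise pure extensions (Proposition~\ref{prop-C-acyclic-properties}), we obtain the inclusion $\leftperp{(\rightperp{\class{S}})} \subseteq {}_\class{C}\class{W}$. Conversely, the filtration from the previous paragraph exhibits every $W \in {}_\class{C}\class{W}$ as a transfinite degreewise pure extension of members of $\class{S}$, so ${}_\class{C}\class{W} \subseteq \leftperp{(\rightperp{\class{S}})}$. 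Hence the two classes coincide and $\rightperp{\class{S}} = \rightperp{{}_\class{C}\class{W}} = \class{F}$, so $({}_\class{C}\class{W}, \class{F})$ is the complete cotorsion pair cogenerated by the set $\class{S}$.

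Finally, hereditariness is automatic from the thickness of ${}_\class{C}\class{W}$ in $\ch_{pur}$ recorded in Proposition~\ref{prop-C-acyclic-properties}: the standard criterion identifying hereditariness of a complete cotorsion pair in an exact category with closure of the left class under kernels of admissible epimorphisms (between objects of the left class) applies verbatim to $\ch_{pur}$, and this closure is a special case of the 2-out-of-3 property.
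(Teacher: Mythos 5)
Your proposal is correct and is essentially the paper's own argument: choose a set $\class{S}$ of small $\class{C}$-acyclic complexes, apply Proposition~\ref{prop-cogeneration-by-set}, identify $\leftperp{(\rightperp{\class{S}})}$ with ${}_\class{C}\class{W}$ by writing every $\class{C}$-acyclic complex as a transfinite categorically pure (hence, by Lemma~\ref{lemma-categorical-degreewise-purity}, degreewise pure) extension of small $\class{C}$-acyclic complexes, using Proposition~\ref{prop-categorical-pure-subcomplexes-quotients} and thickness to keep the layers $\class{C}$-acyclic, and deduce hereditariness from thickness. The only difference is that the deconstructibility step you attribute to ``standard locally-presentable machinery'' is carried out explicitly in the paper as a transfinite induction, using the fact (\cite[Lemma~5.2.1]{garcia-rozas}, or \cite[Lemma~4.6]{gillespie}) that every nonzero complex contains a nonzero categorically pure subcomplex of cardinality at most $\kappa$, and pulling back pure subcomplexes of the quotients $X/X_i$ to build the chain.
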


 \begin{proof}
For a chain complex $X$,  we define its cardinality to be $|X| :=|\coprod_{n\in\Z} X_n|$.
Let $\kappa \geq |R|$ be an infinite cardinal. Up to isomorphism, we can find a set $\class{S}$ (as opposed to a proper class) of $\class{C}$-acyclic chain complexes $X$ with cardinality $|X| \leq \kappa$. By Proposition~\ref{prop-cogeneration-by-set}, it cogenerates a functorially complete cotorsion pair $(\leftperp{(\rightperp{\class{S}})}, \rightperp{\class{S}})$ in the exact category $\ch_{pur}$. Moreover, the class $\leftperp{(\rightperp{\class{S}})}$ consists precisely of direct summands (retracts) of transfinite degreewise pure extensions of complexes in $\class{S}$. We will show that $(\leftperp{(\rightperp{\class{S}})}, \rightperp{\class{S}}) = ({}_\class{C}\class{W}, \class{F})$.

Of course, it is enough to show $\leftperp{(\rightperp{\class{S}})} = {}_\class{C}\class{W}$. The containment $\leftperp{(\rightperp{\class{S}})} \subseteq {}_\class{C}\class{W}$ follows from Proposition~\ref{prop-C-acyclic-properties}, because $\class{S} \subseteq {}_\class{C}\class{W}$ and the class ${}_\class{C}\class{W}$ is closed under direct summands and transfinite degreewise pure extensions.  

On the other hand, we will show ${}_\class{C}\class{W} \subseteq \leftperp{(\rightperp{\class{S}})}$ by showing that every $X \in {}_\class{C}\class{W}$ is a transfinite degreewise pure extension of complexes in $\class{S}$.
But with standard tools and techniques at our disposal, it is easier to show the stronger statement: each $X \in {}_\class{C}\class{W}$ is a transfinite \emph{categorically pure} extension of complexes in $\class{S}$. Moreover, by Lemma~\ref{lemma-categorical-degreewise-purity}, this will prove the weaker statement we want. 

So given $X\in
{}_\class{C}\class{W}$, we use a standard technique to define a strictly increasing chain $X_{i}\subseteq X$
with $X_{i}\in {}_\class{C}\class{W}$, by transfinite induction on $i$.  For $i=0$,
we let $X_{0}$ be a nonzero categorically pure subcomplex of $X$ with $|X_0| \leq \kappa$ (using~\cite[Lemma~4.6]{gillespie}, so~\cite[Lemma~5.2.1]{garcia-rozas}).  Being a categorically pure subcomplex of the $\class{C}$-acyclic $X$, we note that $X_0$ must also be $\class{C}$-acyclic, by Proposition~\ref{prop-categorical-pure-subcomplexes-quotients}.
Having defined the categorically pure subcomplex $X_{i}$ of $X$ and assuming that $X_{i}\neq
X$, we let $N_{i}$ be a nonzero categorically pure subcomplex of $X/X_{i}$ with $|N_i| \leq \kappa$. Since $X_{i}$ is a categorically pure subcomplex of
$X$, Proposition~\ref{prop-categorical-pure-subcomplexes-quotients} assures us that $X/X_{i}$ is also in ${}_\class{C}\class{W}$, and that $N_{i}$ is as well.  We then
let $X_{i+1}$ be the preimage in $X$ of $N_{i}$, so that $X_{i+1}$ is
a categorically pure subcomplex of $X$~\cite[Prop.6.6(3)]{gillespie-G-derived}, so also in ${}_\class{C}\class{W}$.  For the limit ordinal
step, we define $X_{\beta}= \bigcup_{\alpha <\beta}X_{\alpha}$; this
is a colimit of categorically pure subcomplexes of $X$ so is also a categorically pure subcomplex of
$X$.  This process will eventually stop when $X_{i}=X$, at which point
we have written $X$ as a transfinite extension of complexes in $\class{S}$. 

This proves $({}_\class{C}\class{W}, \class{F})$ is a complete cotorsion pair in $\ch_{pur}$. It is certainly hereditary since we even showed in Proosition~\ref{prop-C-acyclic-properties} that ${}_\class{C}\class{W}$ is thick in $\ch_{pur}$.
\end{proof}

\begin{remark}
Assume $\class{C}$ is closed under suspensions and set $$\class{C}^+ = \{\,\Hom_{\Z}(C,\Q) \,|\, C \in \class{C}\,\}.$$
Then the cotorsion pair $({}_\class{C}\class{W}, \class{F})$ of Theorem~\ref{them-C-acyclic-cot-pair} is \emph{generated} by $\class{C}^+$ in  the sense that ${}_\class{C}\class{W} = \leftperp{(\class{C}^+)}$, in $\ch_{pur}$. (Reason:) For $C \in \class{C}$, the Pontryagin dual $\Hom_R(C,\Q)$  is a complex of pure-injectives. So an argument similar to the one in~\cite[Prop.~3.2]{emmanouil-K-flatness-and-orthogonality-in-homotopy-cats} invoking adjoint associativity will work.   
\end{remark}

 \begin{theorem}\label{them-C-acyclic-models}
In fact, $({}_\class{C}\class{W}, \class{F})$ is an injective cotorsion, meaning that 
$$\mathfrak{M} = (All, {}_\class{C}\class{W}, \class{F})$$ is an abelian (equivalently, exact) model structure on the exact category $\ch_{pur}$.
This is a cofibrantly generated model structure. Its homotopy category is a well-generated triangulated category, and equivalent to the full subcategory of $K(R)$ consisting of all complexes (homotopy equivalent to one) in $\class{F}$.
\end{theorem}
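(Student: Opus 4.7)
The plan is to verify that the complete hereditary cotorsion pair $({}_\class{C}\class{W}, \class{F})$ of Theorem~\ref{them-C-acyclic-cot-pair} is an \emph{injective cotorsion pair} on the exact category $\ch_{pur}$, in the sense used in~\cite{gillespie-exact model structures} and~\cite{gillespie-hereditary-abelian-models}. Once this is in place, the Hovey--Gillespie correspondence automatically produces the Hovey triple $(All, {}_\class{C}\class{W}, \class{F})$, and the remaining assertions (cofibrant generation, well-generated homotopy category, and the description of the homotopy category inside $K(R)$) will follow from standard facts. Thickness of ${}_\class{C}\class{W}$ and completeness of the cotorsion pair are already on the table, so the work reduces to identifying the class $\omega := {}_\class{C}\class{W} \cap \class{F}$ with the injective objects of $\ch_{pur}$.

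First I would establish that $\ch_{pur}$ has enough injectives, and that every such injective lies in ${}_\class{C}\class{W}$. Since $R$-Mod$_{pur}$ has enough pure-injectives (via the double character module), for any $F \in \ch$ one can choose degreewise pure embeddings $F_n \hookrightarrow J_n$ into pure-injectives and splice them into a degreewise pure monomorphism from $F$ into a contractible complex of the form $\bigoplus_n D^{n+1}(J_n)$. Such a contractible complex with pure-injective components is injective in $\ch_{pur}$, and being contractible it lies in ${}_\class{C}\class{W}$ by Proposition~\ref{prop-C-acyclic-properties}(1). This step is essentially bookkeeping, modelled on the classical construction of injective resolutions of chain complexes.

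With enough injectives in hand, the equality $\omega = \{\text{injective objects of } \ch_{pur}\}$ follows by a standard argument. The inclusion $\supseteq$ is immediate since injectives always lie in $\class{F} = \rightperp{({}_\class{C}\class{W})}$ and, by the previous step, in ${}_\class{C}\class{W}$. For $\subseteq$, take $F \in \omega$, embed $F$ into an injective $I \in {}_\class{C}\class{W}$ via a degreewise pure mono, and observe that the cokernel lies in ${}_\class{C}\class{W}$ by thickness (Proposition~\ref{prop-C-acyclic-properties}(2)); since $F \in \class{F}$, the sequence splits, exhibiting $F$ as a retract of an injective and hence itself injective. This is the technical crux of the theorem, where the thickness of ${}_\class{C}\class{W}$ and the completeness of the cotorsion pair combine essentially.

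The remaining claims are then formal. Cofibrant generation of $(All, {}_\class{C}\class{W}, \class{F})$ uses the set $\class{S}$ of cogenerators from the proof of Theorem~\ref{them-C-acyclic-cot-pair} as a source of generating trivial cofibrations, together with a standard generating set for admissible monos in the locally presentable category $\ch$. Well-generatedness of the homotopy category then follows from the fact that cofibrantly generated abelian model structures on locally presentable categories are combinatorial. Finally, every object is cofibrant and the fibrant objects are $\class{F}$, so the bifibrant objects coincide with $\class{F}$; the model-categorical homotopy relation on maps between bifibrants (difference factoring through a trivially bifibrant object) coincides with chain homotopy, because the trivially bifibrant objects are the injectives, which are contractible. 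Therefore the homotopy category is the full subcategory of $K(R)$ on the complexes (chain-homotopy equivalent to ones) in $\class{F}$.
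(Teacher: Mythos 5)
Your proposal is correct and follows essentially the same route as the paper's proof: the crux in both is identifying ${}_\class{C}\class{W} \cap \class{F}$ with the injective objects of $\ch_{pur}$ (the contractible complexes of pure-injectives) via the splitting argument that uses thickness of ${}_\class{C}\class{W}$ together with completeness of the cotorsion pair, after which Hovey's correspondence, cofibrant generation from the cogenerating sets, Rosick\'y's result for well-generatedness, and the identification of the formal homotopy relation with chain homotopy finish the job exactly as in the paper. The only cosmetic difference is that you build enough injectives by hand, splicing pure embeddings $F_n \hookrightarrow J_n$ into a degreewise pure mono $F \rightarrow \bigoplus_n D^{n+1}(J_n)$, where the paper instead cites \cite[Lemma~5.7(4)]{gillespie-G-derived} for the same fact.
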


\begin{proof}
As pointed out in Section~\ref{subsection-cot-model}, for abelian categories we need not distinguish between an exact model structure in the sense of~\cite{gillespie-exact model structures} and an abelian model structure compatible with a proper class of short exact sequences as in~\cite{hovey}. So we use the usual Hovey's correspondence between cotorsion pairs and abelian model structures. 

We already showed ${}_\class{C}\class{W}$ is thick. By\cite[Lemma~5.7(4)]{gillespie-G-derived}, the injective objects in the exact category $\ch_{pur}$ are precisely the contractible complexes with pure-injective components, and they make up the right hand side of a complete cotorsion pair in $\ch_{pur}$. 

 So all that is left is to show that ${}_\class{C}\class{W} \cap \class{F}=\class{I}$, where $\class{I}$ denotes the class of all contractible complexes with pure-injective components. By Proposition~\ref{prop-C-acyclic-properties}, ${}_\class{C}\class{W}$ contains all contractible complexes, so it follows easily that ${}_\class{C}\class{W} \cap \class{F}\supseteq \class{I}$. To prove the reverse containment ${}_\class{C}\class{W} \cap \class{F}\subseteq \class{I}$, let $X \in {}_\class{C}\class{W} \cap \class{F}$ and write a degreewise pure short exact sequence $0 \xrightarrow{} X \xrightarrow{}  I \xrightarrow{}  I/X
\xrightarrow{}  0$ where $I$ is injective in $\ch_{pur}$. That is, $I$ is a contractible complex of pure-injectives, and so as already noted, $I \in {}_\class{C}\class{W}$. But
${}_\class{C}\class{W}$ is a thick class in $\ch_{pur}$, which implies $I/X \in
{}_\class{C}\class{W}$. But now since $({}_\class{C}\class{W},\class{F})$ is a cotorsion pair
the short exact sequence splits. Therefore $X$ is a direct summand of
$I$, and this proves $X \in \class{I}$.

Since the cotorsion pairs are each cogenerated by a set it follows that the model structure is cofibrantly generated~\cite[Section~6]{hovey}. Moreover, since we have a cofibrantly generated model structure on a locally presentable (pointed) category, it follows from~\cite{rosicky-brown representability combinatorial model srucs} that the homotopy category is well generated in the sense of~\cite{neeman-well generated}. 

It is left to prove the claim that $\textnormal{Ho}(\mathfrak{M})$ is equivalent to the full subcategory of $K(R)$ consisting of all complexes (homotopy equivalent to one) in $\class{F}$. From the fundamental theorem of model categories~\cite[Theorem~1.2.10]{hovey-model-categories} we know that the homotopy category of this model structure is equivalent to $\class{F}/\sim$ where $\sim$ denotes the formal homotopy relation. However, it follows from~\cite[Corollary~4.8]{gillespie-exact model structures} that formally $f \sim g$ if and only if $g-f$ factors through an injective object in $\ch_{pur}$. As previously noted, these are the contractible complexes with pure-injective components. But since they are contractible, and since moreover all contractible complexes are trivial (Proposition~\ref{prop-C-acyclic-properties}), this implies that $f \sim g$ if and only if $f$ and $g$ are chain homotopic in the usual sense; see~\cite[Lemma~5.1]{gillespie-hereditary-abelian-models}. So the homotopy category is as described.
\end{proof} 

 As an aside, in the standard language from~\cite{enochs-jenda-book} and~\cite{garcia-rozas}  we can assert that each complex has a $\class{C}$-acyclic cover. This follows from Theorem~\ref{them-C-acyclic-cot-pair} and the fact that ${}_\class{C}\class{W}$ is closed under direct limits (Proposition~\ref{prop-C-acyclic-properties}).
\begin{corollary}
For any class of complexes $\class{C}$, the class ${}_\class{C}\class{W}$ is a covering class. That is, each complex has a $\class{C}$-acyclic cover. 
\end{corollary}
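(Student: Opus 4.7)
The plan is to deduce the existence of covers from a standard theorem of Enochs: in a Grothendieck category, a precovering class that is closed under direct limits is automatically a covering class. Since $\ch$ is Grothendieck and ${}_\class{C}\class{W}$ is closed under direct limits by Proposition~\ref{prop-C-acyclic-properties}, I only need to verify that every complex admits a ${}_\class{C}\class{W}$-precover in the usual sense (in $\ch$, not merely in $\ch_{pur}$).

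For the precovering step, I would argue as follows. Let $X \in \ch$. By Theorem~\ref{them-C-acyclic-cot-pair}, the cotorsion pair $({}_\class{C}\class{W}, \class{F})$ in $\ch_{pur}$ is complete, so we obtain a short exact sequence
\[
0 \xrightarrow{} F \xrightarrow{} W \xrightarrow{\phi} X \xrightarrow{} 0
\]
that is degreewise pure, with $W \in {}_\class{C}\class{W}$ and $F \in \class{F}$. I claim $\phi$ is a ${}_\class{C}\class{W}$-precover of $X$ in $\ch$. Indeed, given any morphism $f \colon W' \xrightarrow{} X$ with $W' \in {}_\class{C}\class{W}$, form the pullback
\[
0 \xrightarrow{} F \xrightarrow{} E \xrightarrow{} W' \xrightarrow{} 0.
\]
Since pullbacks in $\ch$ are computed degreewise, and since pullbacks of pure exact sequences in $R$-Mod are again pure exact, this pullback sequence lives in $\ch_{pur}$. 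By the cotorsion pair, $\Ext^1_{pur}(W',F) = 0$, so the sequence splits and yields the required factorization of $f$ through $\phi$.

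With precovers in hand, I would then invoke Enochs's theorem (see~\cite[Theorem~5.2.3]{enochs-jenda-book}, or El Bashir's extension to Grothendieck categories) to conclude that ${}_\class{C}\class{W}$ is in fact a covering class, since it is closed under direct limits in the Grothendieck category $\ch$. The potential obstacle is making sure the ambient Grothendieck category Enochs's theorem requires is indeed $\ch$ (the abelian category), not the merely exact $\ch_{pur}$; but the direct-limit closure of ${}_\class{C}\class{W}$ in Proposition~\ref{prop-C-acyclic-properties}(1) is stated in $\ch$, and precovers are a purely categorical notion, so this causes no difficulty.
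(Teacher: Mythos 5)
Your proposal is correct and is essentially the paper's own argument: the paper likewise deduces the corollary from Theorem~\ref{them-C-acyclic-cot-pair} (completeness of the cotorsion pair in $\ch_{pur}$, giving precovers) together with closure of ${}_\class{C}\class{W}$ under direct limits (Proposition~\ref{prop-C-acyclic-properties}) via the Enochs/El~Bashir principle. Your pullback argument showing that a special precover in the degreewise pure exact structure is a genuine precover in $\ch$ simply makes explicit a step the paper leaves implicit.
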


\section{K-flat, pure acyclic, and DG-pure-injective complexes}
With respect to the pure exact structure on $R$-modules, an \emph{acyclic} complex in the formal sense of~\cite[Def.~10.1]{buhler-exact categories}, is precisely a \emph{pure acyclic complex} in the usual sense. Let us record this definition. 

\begin{definition}\label{def-k-flat}
Let $W$ be a chain complex of left  $R$-modules.
\begin{enumerate}
\item $W$ is called \emph{pure acyclic} if $M \tensor_R W$ is acyclic for all $R^\circ$-modules $M$. It is equivalent to require that $X \tensor_R W$ be acyclic for all chain complexes of $R^\circ$-modules $X$. (See Prop.~\ref{proposition-pure-acyclic-complexes} below.) 
We denote the class of all the pure acyclic complexes by $\class{A}_{pur}$. 
\item $W$ is called \emph{K-flat} if $E \tensor_R W$ is acyclic for all acyclic chain complexes of $R^\circ$-modules $E$. 
We denote the class of all K-flat complexes by $\class{KF}$.
\end{enumerate} 
\end{definition}

The two notions are nicely related as follows. 

\begin{proposition}\label{proposition-pure-acyclic-complexes}
The following are equivalent for a chain complex $W$.
\begin{enumerate}
\item $W \in \class{A}_{pur}$. That is, $M \tensor_R W$ is acyclic for all $R^\circ$-modules $M$.
\item $X \tensor_R W$ is acyclic for all chain complexes of $R^\circ$-modules $X$.
\item  $W$ is acyclic and K-flat.
\end{enumerate}
\end{proposition}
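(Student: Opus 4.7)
The plan is to prove the implications in the order (2) $\Rightarrow$ (1), (2) $\Rightarrow$ (3), (1) $\Rightarrow$ (2), and (3) $\Rightarrow$ (1). The first two are immediate specializations: since $(S^0(M) \tensor_R W)_n = M \tensor_R W_n$, the chain complex tensor product $S^0(M) \tensor_R W$ coincides with the degreewise tensor $M \tensor_R W$, giving (2) $\Rightarrow$ (1). Taking $X = S^0(R)$ in (2) gives $W \cong R \tensor_R W$ acyclic, and letting $X$ range over acyclic complexes gives K-flatness.

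For (1) $\Rightarrow$ (2), I would fix a pure acyclic $W$ and consider the class $\class{T}_W = \{X \in \chop : X \tensor_R W \text{ is acyclic}\}$. The same calculation as in the proof of Proposition~\ref{prop-C-acyclic-properties} shows $\class{T}_W$ is closed under direct sums, suspensions, direct limits, and 2-out-of-3 on degreewise split short exact sequences of complexes. By hypothesis $S^0(M) \in \class{T}_W$ for every $M$, and hence $S^n(M) \in \class{T}_W$ for every $n$ and $M$. The degreewise split short exact sequences $0 \to \sigma_{\geq n+1} X \to \sigma_{\geq n} X \to S^n(X_n) \to 0$ and descending induction place every bounded complex in $\class{T}_W$. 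A bounded below complex is the directed union of its finitely generated (hence bounded) subcomplexes, and an arbitrary complex $X$ is the directed union $\bigcup_n \sigma_{\geq -n} X$ of bounded below subcomplexes; two applications of closure under directed colimits then put $X$ in $\class{T}_W$.

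For (3) $\Rightarrow$ (1), I would assume $W$ is acyclic and K-flat, and fix a projective resolution $\cdots \to P_1 \to P_0 \to M \to 0$ of an arbitrary right $R$-module $M$. Form the acyclic complex $E$ with $P_i$ in degree $i \geq 0$ and $M$ in degree $-1$. Then $S^{-1}(M)$ is a subcomplex of $E$ whose quotient is the truncated complex $P = \cdots \to P_1 \to P_0 \to 0$, producing a degreewise split short exact sequence $0 \to S^{-1}(M) \to E \to P \to 0$. Tensoring with $W$, the term $E \tensor_R W$ is acyclic by K-flatness, so the long exact sequence in homology reduces the problem to showing $P \tensor_R W$ is acyclic. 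To do this I filter $P \tensor_R W$ by the subcomplexes $F_k = \bigoplus_{p \leq k} P_p \tensor_R W$; the graded piece $F_k / F_{k-1}$ is, up to sign and suspension, the complex $P_k \tensor_R W$, which is acyclic because $P_k$ is flat and $W$ is acyclic. Induction shows each $F_k$ is acyclic, and their directed union $P \tensor_R W$ is then acyclic as well.

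I expect the main obstacle to be the last implication, where one must manufacture pure acyclicity from the strictly weaker data of acyclicity and K-flatness. The decisive technical step there is the filtration argument showing $P \tensor_R W$ is acyclic for $P$ a bounded below complex of projectives, which is essentially the assertion that such $P$ is itself K-flat. The more combinatorial implication (1) $\Rightarrow$ (2) also requires care in reducing an arbitrary complex to sphere complexes through closure operations, but follows a standard pattern once closure properties of $\class{T}_W$ are in hand.
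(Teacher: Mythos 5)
Your proposal is correct in structure, and it is worth noting that the paper itself does not actually prove this proposition: it cites Exercises~5.4.16--5.4.17 of \cite{christensen-foxby-holm-book} and points to \cite[Prop.~1.1]{emmanouil-relation-K-flatness-K-projectivity} for a proof. So you are supplying a self-contained argument where the paper outsources one, and the argument you give is the standard one, close in spirit to Emmanouil's: the implications $(2)\Rightarrow(1)$ and $(2)\Rightarrow(3)$ are specializations; $(1)\Rightarrow(2)$ reduces an arbitrary complex $X$ to sphere complexes using closure of $\class{T}_W$ under suspensions, direct limits, and 2-out-of-3 on degreewise split sequences; and $(3)\Rightarrow(1)$ compares the augmented projective resolution $E$ of a module $M$ with its truncation $P$, the key point being the filtration argument showing that a bounded below complex of projectives is K-flat. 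Your implication scheme $(1)\Leftrightarrow(2)$, $(2)\Rightarrow(3)$, $(3)\Rightarrow(1)$ does establish all equivalences, and the closure properties, the reduction via the long exact sequence in homology, and the filtration of $P \tensor_R W$ by the subcomplexes $F_k$ are all sound.

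One concrete slip to repair: your truncation sequences in $(1)\Rightarrow(2)$ are oriented backwards for this paper's conventions. Since differentials here lower degree, the brutal truncation $\sigma_{\geq n+1}X$ is a \emph{quotient} of $\sigma_{\geq n}X$, not a subcomplex (the differential maps $X_{n+1}$ into $X_n$, so the part in degrees $\geq n+1$ is not closed under $d$). The correct degreewise split short exact sequence is
$$0 \xrightarrow{} S^n(X_n) \xrightarrow{} \sigma_{\geq n}X \xrightarrow{} \sigma_{\geq n+1}X \xrightarrow{} 0,$$
with the sphere at the bottom --- exactly the orientation you used, correctly, for $0 \to S^{-1}(M) \to E \to P \to 0$ in $(3)\Rightarrow(1)$; your two halves are inconsistent with each other on this point. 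For the same reason $\sigma_{\geq -n}X$ is not a subcomplex of $X$, so $X$ is not the directed union of these; either use the bounded above subcomplexes $\sigma_{\leq n}X$, or, more simply, note that \emph{any} complex is the directed union of its finitely generated subcomplexes, which are bounded --- this makes your intermediate bounded below step unnecessary. Since the 2-out-of-3 property and closure under direct limits are indifferent to which side of the sequence each term sits on, these repairs are routine and your proof then goes through.
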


\begin{proof}
These statements appear as Exercises~5.4.16 and~5.4.17 in~\cite{christensen-foxby-holm-book}. See~\cite[Prop.~1.1]{emmanouil-relation-K-flatness-K-projectivity} for a clear proof. 
\end{proof}

\subsection{Complexes of pure-injective $R$-modules}
Let $\dwclass{PI}$ denote the class of all degreewise pure-injective chain complexes. That is, $X \in \dwclass{PI}$ if each $X_n \in \class{PI}$. Stovicek proved the following in~\cite{stovicek-purity}.

\begin{theorem}[Stovicek~\cite{stovicek-purity} Theorem~5.4]\label{them-stovicek-pure}
The pair $(\class{A}_{pur},\dwclass{PI})$ is an injective cotorsion in $\ch_{pur}$, meaning that 
$$\mathfrak{M}_{\textnormal{pur}} = (All, \class{A}_{pur},\dwclass{PI})$$ is a (cofibrantly generated) abelian model structure on the exact category $\ch_{pur}$.
Its homotopy category recovers $\class{D}_{pur}(R)$, the pure derived category of $R$, and shows it to be equivalent to $K(\class{PI})$, the full subcategory of $K(R)$ consisting of all complexes (homotopy equivalent to one) in $\dwclass{PI}$. 

Assuming $R$ is commutative, this model structure is in fact monoidal with respect to the usual tensor product of chain complexes. 
\end{theorem}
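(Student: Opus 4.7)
The plan is to deduce this as a direct application of the general machinery Theorem~\ref{them-C-acyclic-cot-pair} and Theorem~\ref{them-C-acyclic-models}, specialized to the case $\class{C} = \chop$ (the class of \emph{all} chain complexes of right $R$-modules). By the equivalence (1) $\Leftrightarrow$ (2) in Proposition~\ref{proposition-pure-acyclic-complexes}, we then have ${}_\class{C}\class{W} = \class{A}_{pur}$. So Theorem~\ref{them-C-acyclic-cot-pair} and Theorem~\ref{them-C-acyclic-models} immediately hand us a cofibrantly generated injective cotorsion pair $(\class{A}_{pur}, \class{F})$ in $\ch_{pur}$, hence an abelian model structure $(All, \class{A}_{pur}, \class{F})$, and an equivalence of $\textnormal{Ho}(\mathfrak{M}_{\textnormal{pur}})$ with the full subcategory of $K(R)$ on complexes homotopy equivalent to one in $\class{F}$. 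So the real work is in identifying $\class{F}$ with $\dwclass{PI}$.

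For the containment $\class{F} \subseteq \dwclass{PI}$, I would argue via disk complexes. Given $Y \in \class{F}$ and a pure monomorphism $M \hookrightarrow N$ of $R$-modules, the induced map $D^{n+1}(M) \hookrightarrow D^{n+1}(N)$ is a degreewise pure monomorphism with cokernel $D^{n+1}(N/M)$, which is contractible and therefore in $\class{A}_{pur}$ by Proposition~\ref{prop-C-acyclic-properties}(1). Vanishing of $\Ext^1_{pur}(D^{n+1}(N/M), Y)$ then yields surjectivity of the map $\Hom_R(N, Y_n) \to \Hom_R(M, Y_n)$ via the standard adjunction $\Hom_{\ch}(D^{n+1}(-),Y) \cong \Hom_R(-,Y_n)$, proving $Y_n \in \class{PI}$.

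For the other containment $\dwclass{PI} \subseteq \class{F}$, given $W \in \class{A}_{pur}$ and $Y \in \dwclass{PI}$, any degreewise pure short exact sequence $0 \to Y \to E \to W \to 0$ splits in each degree (since each $Y_n$ is pure-injective), so is classified by an element of $K(R)(W,\Sigma^{-1}Y)$. The key input I need is the classical fact that every chain map from a pure acyclic complex into a complex of pure-injectives is null-homotopic (Simson/St\!'ov\'\i\v{c}ek; see also Emmanouil). Granted this, the sequence splits and $\Ext^1_{pur}(W,Y) = 0$. \textbf{This null-homotopy result is the main obstacle}; everything else is routine specialization. One natural route to prove it, if one wishes to keep the argument self-contained, is to observe that any pure acyclic complex $W$ can be written as a transfinite pure extension of bounded pure acyclic pieces with small cardinality (as in the proof of Theorem~\ref{them-C-acyclic-cot-pair}) and reduce by induction to the contractible case, where the null-homotopy is immediate.

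Finally, the homotopy-category statement is exactly the conclusion of Theorem~\ref{them-C-acyclic-models}, identified with $K(\class{PI})$ via the description of $\class{F}$ just established; by definition this is $\class{D}_{pur}(R)$. For the monoidal claim when $R$ is commutative, I would verify Hovey's monoidal axioms directly for the cotorsion pair $(\class{A}_{pur}, \dwclass{PI})$. The unit $S^0(R)$ is cofibrant and, since every object is cofibrant, the unit axiom is trivial. The pushout-product axiom reduces to two checks: (i) the tensor product of two degreewise pure monomorphisms of complexes is again a degreewise pure monomorphism, which follows from the corresponding fact for modules together with the formula for $\otimes_R$ in each degree as a direct sum of tensor products; and (ii) if one of the two cofibrations has cokernel in $\class{A}_{pur}$, then so does the pushout-product, which follows from associativity of $\otimes_R$ and the evident closure of $\class{A}_{pur}$ under tensoring with arbitrary complexes.
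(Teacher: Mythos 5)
Your main line of argument matches the paper's own proof: specialize Theorems~\ref{them-C-acyclic-cot-pair} and~\ref{them-C-acyclic-models} to the case where $\class{C}$ is the class of all complexes of right $R$-modules, use Proposition~\ref{proposition-pure-acyclic-complexes} to see ${}_\class{C}\class{W} = \class{A}_{pur}$, and then reduce everything to the identification $\rightperp{\class{A}_{pur}} = \dwclass{PI}$. Your disk-complex argument for $\rightperp{\class{A}_{pur}} \subseteq \dwclass{PI}$ is correct (modulo a harmless indexing slip: with the paper's conventions $\Hom_{\ch}(D^{n+1}(M),Y) \cong \Hom_R(M,Y_{n+1})$, not $\Hom_R(M,Y_n)$), and your reduction of the reverse containment to the fact that every chain map from a pure acyclic complex to a complex of pure-injectives is null-homotopic is also correct. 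If you simply cite that fact from \v{S}\v{t}ov\'{\i}\v{c}ek~\cite{stovicek-purity} (or Emmanouil~\cite{emmanouil-pure-acyclic-complexes}), you are at exactly the paper's level of rigor: the paper likewise defers this identification to \v{S}\v{t}ov\'{\i}\v{c}ek and only summarizes his argument. Your monoidal verification is also fine and is essentially the paper's (Hovey's conditions (a)--(d); your check (ii) works because the cokernel of the pushout-product of $f$ and $g$ is $\cok f \otimes_R \cok g$).

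The genuine problem is your proposed \emph{self-contained} proof of the key null-homotopy fact. You assert that any pure acyclic complex can be written as a transfinite pure extension of \emph{bounded} pure acyclic pieces of small cardinality ``as in the proof of Theorem~\ref{them-C-acyclic-cot-pair}.'' That proof yields transfinite extensions of \emph{small} pure acyclic pieces, but nothing bounded: the small categorically pure subcomplexes supplied by~\cite[Lemma~4.6]{gillespie} spread across all degrees, and there is no way to truncate them without splitting pure exact sequences at the top. Moreover, even granting bounded pieces, they are not contractible (a non-split pure exact sequence $0 \to A \to B \to C \to 0$ viewed as a complex is bounded pure acyclic but not contractible), so ``reduce by induction to the contractible case'' is not available as stated; what is true is that bounded pure acyclic complexes are finite iterated degreewise pure extensions of disks, which would suffice via Eklof's lemma. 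But that only sharpens the real issue: the claim that pure acyclic complexes are transfinite pure extensions of bounded pure acyclic (equivalently, of disk) complexes is essentially \emph{equivalent} to the containment $\dwclass{PI} \subseteq \rightperp{\class{A}_{pur}}$ you are trying to prove, so assuming it is circular. The paper's route is genuinely different and this difference is the whole content of the step: pure acyclic complexes are \emph{direct limits} (not transfinite extensions) of contractible complexes~\cite{emmanouil-pure-acyclic-complexes}, and one then needs the nontrivial additional fact that the kernel of $\Ext^1_{pur}(-,Y)$ is closed under direct limits for $Y \in \dwclass{PI}$ (adapting~\cite[Prop.~3.1]{gillespie-ding-modules}); this extra step is unavoidable precisely because null-homotopies on the terms of a direct system need not be compatible, so the direct-limit statement cannot be replaced by an Eklof-type transfinite induction.
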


\begin{proof}
In light of Proposition~\ref{proposition-pure-acyclic-complexes} we get ${}_\class{C}\class{W} = \class{A}_{pur}$, by taking $\class{C}$ to be the class of \emph{all} chain complexes of (right) $R$-modules.  Therefore, Theorem~\ref{them-C-acyclic-models} produces a model structure $\mathfrak{M}_{pur} = (All, \class{A}_{pur}, \rightperp{\class{A}_{pur}})$ on the exact category $\ch_{pur}$, with the stated properties. Stovicek showed that the class of fibrant objects, $\rightperp{\class{A}_{pur}}$, is precisely $\dwclass{PI}$. Briefly, Stovicek's argument can be summarized as follows: Each pure acyclic complex is a direct limit of contractible complexes; see also~\cite[Proposition~2.2]{emmanouil-pure-acyclic-complexes}. So then for a given $Y \in \dwclass{PI}$, the kernel of the functor $\Ext^1_{pur}(-,Y)$ contains all contractible complexes, and one checks that the argument from~\cite[Proposition~3.1]{gillespie-ding-modules} readily adapts to show that the kernel of the functor $\Ext^1_{pur}(-,Y)$ is closed under direct limits. 

Assuming $R$ is commutative, we are asserting that the model structure is monoidal with respect to the usual tensor product of chain complexes. We refer the reader to the proof ahead of the same fact for the model structure in Theorem~\ref{theorem-derived-cat}. The proof is almost identical. 
\end{proof}

\subsection{Acyclic complexes of pure-injective $R$-modules}
We now give a model category interpretation of Emmanouil's result from~\cite{emmanouil-K-flatness-and-orthogonality-in-homotopy-cats}: The class of all K-flat complexes is orthogonal to the class of all acyclic complexes of pure-injectives. 
 
Recall that we are letting $\class{KF}$ denote the class of all K-flat complexes. As pointed out in~\cite{emmanouil-K-flatness-and-orthogonality-in-homotopy-cats}, $\class{KF}$ determines a triangulated subcategory of the chain homotopy category $K(R)$. Thus we can form the Verdier quotient $$\class{D}_{\textnormal{K-flat}}(R) := K(R)/\class{KF},$$ which we will call the \emph{K-flat derived category of $R$}.  Let us say that a chain map $f : X \xrightarrow{} Y$ is a \emph{K-flat quism} if $E \otimes_R X \xrightarrow{E\otimes_R f} E \otimes_R Y$ is a homology isomorphism for all acyclic chain complexes of $R^\circ$-modules $E$. We note that $f$ is a K-flat quism if and only if its cone is K-flat. One can argue this from the standard (degreewise split) short exact sequence containing cone($f$), as in~\cite[Paragraph~1.5.2]{weibel}.

We let $\exclass{PI}$ denote the class of all exact complexes of pure-injective $R$-modules. That is $X \in \exclass{PI}$ if $X$ is acyclic (just exact, not necessarily pure acyclic) and each $X_n \in \class{PI}$.

\begin{theorem}\label{them-emman-K-flat}
The pair $(\class{KF},\exclass{PI})$ is an injective cotorsion in $\ch_{pur}$, meaning that 
$$\mathfrak{M}_{\class{KF}} = (All, \class{KF},\exclass{PI})$$ is a (cofibrantly generated) abelian model structure on the exact category $\ch_{pur}$. The weak equivalences in this model structure are precisely the K-flat quisms.  Its homotopy category recovers $\class{D}_{\textnormal{K-flat}}(R)$, the K-flat derived category of $R$, and shows it to be equivalent to $K_{ac}(\class{PI})$, the full subcategory of $K(R)$ consisting of all acyclic complexes (homotopy equivalent to one) in $\dwclass{PI}$.
\end{theorem}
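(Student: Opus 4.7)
The plan is to apply Theorems~\ref{them-C-acyclic-cot-pair} and~\ref{them-C-acyclic-models} to the class $\class{C}$ of all acyclic chain complexes of right $R$-modules. With this choice ${}_\class{C}\class{W}$ coincides by definition with $\class{KF}$, so the two theorems immediately supply a complete hereditary cotorsion pair $(\class{KF},\class{F})$, cogenerated by a set, together with the associated cofibrantly generated abelian model structure on $\ch_{pur}$, where $\class{F} := \rightperp{\class{KF}}$ is the fibrant class. The real work is then to identify $\class{F}$ with $\exclass{PI}$; once this is in hand the statements about weak equivalences and the homotopy category will follow without difficulty.

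For the inclusion $\exclass{PI} \subseteq \class{F}$, let $Y \in \exclass{PI}$ and $W \in \class{KF}$, and let $\xi$ be any degreewise pure short exact sequence $0 \to Y \to X \to W \to 0$. Since each $Y_n$ is pure-injective, $\xi$ splits in each degree, so $\xi$ is a degreewise split extension of chain complexes. Such extensions are classified up to equivalence by elements of $\Hom_{K(R)}(W,\Sigma Y)$; but $\Sigma Y \in K_{ac}(\class{PI})$, and Emmanouil's orthogonality result~\cite[Theorem~3.4]{emmanouil-K-flatness-and-orthogonality-in-homotopy-cats} forces $\Hom_{K(R)}(W,\Sigma Y)=0$. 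Hence $\xi$ splits.

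For the reverse inclusion $\class{F} \subseteq \exclass{PI}$, observe first that Proposition~\ref{proposition-pure-acyclic-complexes} gives $\class{A}_{pur} \subseteq \class{KF}$, and hence $\class{F} = \rightperp{\class{KF}} \subseteq \rightperp{\class{A}_{pur}} = \dwclass{PI}$ by Theorem~\ref{them-stovicek-pure}. So every $Y \in \class{F}$ already has pure-injective components. But this means the degreewise split identification above applies to an arbitrary $W$, giving $\Ext^1_{pur}(W,Y) \cong \Hom_{K(R)}(W,\Sigma Y)$; the assumption $Y \in \rightperp{\class{KF}}$ therefore forces $\Hom_{K(R)}(W,\Sigma Y)=0$ for every $W \in \class{KF}$. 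By Emmanouil's Bousfield localizing pair this places $\Sigma Y$ in $K_{ac}(\class{PI})$, so in particular $\Sigma Y$, and hence $Y$, is acyclic. Combined with the pure-injectivity of its components, $Y \in \exclass{PI}$.

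Finally, Theorem~\ref{them-C-acyclic-models} identifies the homotopy category with the full subcategory of $K(R)$ on complexes homotopy equivalent to one in $\exclass{PI}$, i.e.\ with $K_{ac}(\class{PI})$. Equivalently, the homotopy category is $K(R)$ localized at its weak equivalences, which in turn is the Verdier quotient $K(R)/\class{KF} = \class{D}_{\textnormal{K-flat}}(R)$; under this quotient description a chain map $f$ is a weak equivalence precisely when its mapping cone lies in $\class{KF}$, and the tensor criterion for K-flatness makes this condition coincide verbatim with the K-flat quism condition on $f$. The principal subtlety in the argument is the identification $\Ext^1_{pur}(W,Y) \cong \Hom_{K(R)}(W,\Sigma Y)$ whenever $Y$ has pure-injective components; this degreewise split bridge is what transports Emmanouil's homotopy-categorical orthogonality into the $\ch_{pur}$-cotorsion-theoretic setting where we need it.
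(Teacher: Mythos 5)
Your identification of the fibrant class is correct, but it proceeds by a genuinely different route than the paper. You import Emmanouil's theorem~\cite[Theorem~3.4]{emmanouil-K-flatness-and-orthogonality-in-homotopy-cats} as a black box---both the orthogonality $\Hom_{K(R)}(W,Z)=0$ for $W$ K-flat, $Z \in K_{ac}(\class{PI})$, and the stronger localizing-pair fact that $K_{ac}(\class{PI})$ is exactly the right $\Hom_{K(R)}$-orthogonal of $K\textnormal{-Flat}$---and transport it into $\ch_{pur}$ through the bridge $\Ext^1_{pur}(W,Y)\cong \Hom_{K(R)}(W,\Sigma Y)$, valid whenever $Y$ has pure-injective components (degreewise pure extensions into such $Y$ are degreewise split, and degreewise split extensions are classified by homotopy classes of maps into the suspension). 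That bridge is correct and is indeed the right technical point. The paper, by contrast, never invokes Emmanouil's theorem in this proof: acyclicity of complexes in $\rightperp{\class{KF}}$ comes from $S^n(R) \in \class{KF}$, pure-injectivity of their components from Stovicek's cotorsion pair $(\class{A}_{pur},\dwclass{PI})$ of Theorem~\ref{them-stovicek-pure}, and the reverse inclusion $\exclass{PI} \subseteq \rightperp{\class{KF}}$ from a completeness swindle: embed $X \in \exclass{PI}$ into some $Y \in \rightperp{\class{KF}}$ with K-flat cokernel $K$; then $K$ is acyclic and K-flat, hence pure acyclic by Proposition~\ref{proposition-pure-acyclic-complexes}, so Stovicek's cotorsion pair splits the sequence and $X$ is a retract of $Y$. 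What the paper's route buys is independence: the theorem then constitutes a self-contained, cotorsion-theoretic reproof (and strengthening) of Emmanouil's orthogonality, on which the later recollement and compact-generation results can rest. Your route is shorter, but it demotes the theorem to a corollary of the very result it is meant to lift.

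One step in your final paragraph needs repair. You identify $\textnormal{Ho}(\mathfrak{M}_{\class{KF}})$ with $K(R)/\class{KF}$ on the grounds that it is ``$K(R)$ localized at its weak equivalences,'' and then you \emph{deduce} from this quotient description that the weak equivalences are the maps with K-flat mapping cone. As written this is circular: to know that localizing $K(R)$ at the weak equivalences yields the Verdier quotient by $\class{KF}$, you must already know that the weak equivalences in $K(R)$ are precisely the maps whose cone lies in $\class{KF}$---which is the very statement being deduced. The gap is fixable in two standard ways: either prove the quism characterization first, directly from a (cofibration, trivial fibration) factorization as the paper does (kernels of trivial fibrations lie in $\class{KF}\cap\exclass{PI}$, hence are contractible complexes of pure-injectives, and tensoring these with an acyclic complex stays acyclic), and only then identify the localization; or establish the Verdier quotient identification by the universal-property argument the paper gives, citing \cite[Prop.~3.2]{gillespie-recoll2}, after which your observation that the tensor product commutes with mapping cones cleanly yields the quism characterization. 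Either patch completes your argument; without one, the last paragraph asserts rather than proves the two remaining claims.
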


\begin{proof}
We get ${}_\class{C}\class{W} = \class{KF}$, by taking $\class{C}$ to be the class of all \emph{acyclic} chain complexes of (right) $R$-modules.  Therefore, Theorem~\ref{them-C-acyclic-models} produces a model structure $\mathfrak{M}_{\class{KF}} = (All, \class{KF}, \rightperp{\class{KF}})$ on the exact category $\ch_{pur}$, with the stated properties. Here, $\rightperp{\class{KF}}$ is the right Ext-orthogonal, in $\ch_{pur}$,  of the class of all K-flat complexes. 

It is left to show $\rightperp{\class{KF}} = \exclass{PI}$. Since the class $\class{KF}$ of K-flat complexes contains $\{S^n(R)\}$ it follows that every complex in $\rightperp{\class{KF}}$ is exact (acyclic). Since $\class{KF}$ contains $\class{A}_{pur}$ it follows that $\rightperp{\class{KF}} \subseteq \rightperp{\class{A}_{pur}} = \dwclass{PI}$. This shows $\rightperp{\class{KF}} \subseteq \exclass{PI}$.

To show $\rightperp{\class{KF}} \supseteq \exclass{PI}$, let $X \in \exclass{PI}$. Using the complete cotorsion pair  $(\class{KF}, \rightperp{\class{KF}})$ we can find a (degreewise pure) short exact sequence of complexes
$$0 \xrightarrow{} X \xrightarrow{} Y \xrightarrow{} K \xrightarrow{}0$$
where $K$ is K-flat and $Y \in \rightperp{\class{KF}}$. As already shown above, $Y$ is acyclic. Also $X$ is acyclic by hypothesis, and therefore $K$ is an acyclic K-flat complex. By Proposition~\ref{proposition-pure-acyclic-complexes}, $K$ is pure acyclic. Since $(\class{A}_{pur}, \dwclass{PI})$ is a cotorsion pair, the short exact sequence splits! We conclude $X \in \rightperp{\class{KF}}$ since any Ext-othogonal class is closed under direct summands. 

Let us show that the weak equivalences in $\mathfrak{M}_{\class{KF}}$ are precisely the K-flat quisms.
Given any morphism $f : X \xrightarrow{} Y$, we may use the model structure to factor it as $f = pi$ where $i$ is a cofibration and $p$ is a trivial fibration: $$\begin{tikzcd}
X  \arrow[rd, "i" ', tail]  \arrow[rr, "f"] &   & Y  \\
& Z \arrow[ru, "p" ', two heads] \arrow[rd, two heads] & \\
C \arrow[ru, tail] & & Q \\
\end{tikzcd}$$
This is a commutative diagram with two degreewise pure exact sequences, with $Q := \cok{i}$, and with 
$C := \ker{p} \in \class{KF}\bigcap\exclass{PI}$. This means $C$ must be a contractible complex of pure-injective modules. 
Since the sequences are degreewise pure, for any complex $X$, applying the functor $X \tensor_R -$ yields a similar commutative diagram with the two short exact sequences fixed. Note in particular, that taking the $X = E$ to be acyclic complexes, $E \tensor_R p$ is a homology isomorphism since $E \tensor_R C$ will always be exact ($C$ is contractible). In other words, $p$ is necessarily a K-flat quism. By the 2 out of 3 property, $E \tensor_R f$ will be a homology isomorphism for all acyclic $E$ ($f$ will be a K-flat quism) if and only if $E \tensor_R i$ is a homology isomorphism for all acyclic $E$ ($i$ is a K-flat quism). This happens if and only if $E \tensor_R Q$ is acyclic for all acyclic $E$; in other words, if and only if $Q \in \class{KF}$, the class of trivially cofibrant objects. Summarizing, $f$ will be a K-flat quism if and only if $i$ is a trivial cofibration which, by the 2 out of 3 property on weak equivalences, is equivalent to $f$ being a weak equivalence.

Finally, let us make explicit why the homotopy category $\textnormal{Ho}(\mathfrak{M}_{\class{KF}})$ coincides with the Verdier quotient $\class{D}_{\textnormal{K-flat}}(R) := K(R)/\class{KF}$. For this we explain why $\textnormal{Ho}(\mathfrak{M}_{\class{KF}})$ satisfies the universal property that is unique to the Verdier quotient functor $K(R) \xrightarrow{} K(R)/\class{KF}$. First, note that the canonical functor $\gamma : \ch_{pur} \xrightarrow{} \textnormal{Ho}(\mathfrak{M}_{\class{KF}})$ factors through $K(R)$. Indeed any chain homotopy equivalence is a K-flat quism, and this leads to a unique factorization $\gamma = \bar{\gamma}\circ\pi$:
$$\ch_{pur} \xrightarrow{\pi} K(R) \xrightarrow{\bar{\gamma}} \textnormal{Ho}(\mathfrak{M}_{\class{KF}}).$$
The point is to see that the canonical functor $K(R) \xrightarrow{\bar{\gamma}} \textnormal{Ho}(\mathfrak{M}_{\class{KF}})$ is universal among triangulated functors that ``kill'' $\class{KF}$.
But  the canonical functor $\gamma : \ch_{pur} \xrightarrow{} \textnormal{Ho}(\mathfrak{M}_{\class{KF}})$ already satisfies a similar universal property: It takes short exact sequences in $\ch_{pur}$ to exact triangles in $\textnormal{Ho}(\mathfrak{M}_{\class{KF}})$ and is universal among such functors that ``kill'' $\class{KF}$; see \cite[Prop.~3.2]{gillespie-recoll2}. So now given any triangulated functor $F : K(R) \xrightarrow{} \class{T}$ with $\ker{F} \supseteq \class{KF}$, we note $F\circ\pi$  takes short exact sequences in $\ch_{pur}$ to exact triangles and ``kills'' $\class{KF}$. Hence it induces a unique functor $\bar{F}:\textnormal{Ho}(\mathfrak{M}_{\class{KF}}) \xrightarrow{} \class{T}$ satisfying $\bar{F}\circ\gamma = F\circ\pi$. Now we have $F\circ\pi =  \bar{F}\circ\bar{\gamma}\circ\pi$, and $\pi$ is right cancellable. Therefore $F =  \bar{F}\circ\bar{\gamma}$, proving $K(R) \xrightarrow{\bar{\gamma}} \textnormal{Ho}(\mathfrak{M}_{\class{KF}})$ satisfies the universal propery unique to the Verdier quotient $K(R)/\class{KF}$.
\end{proof}

We have the following interesting consequence of the above theorems. Recall that a DG-projective complex is a K-projective complex with projective components.  
 
\begin{corollary}\label{cor-K-flat-char}
$K\textnormal{-Flat}$ is the isomorphic closure of $K\textnormal{-Proj}$ in $\class{D}_{pur}(R)$. In fact, $X$ is K-flat if and only if there is an (epimorphic) pure homology isomorphism $f : P \xrightarrow{} X$ where $P$ is DG-projective.
\end{corollary}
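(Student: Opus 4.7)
The plan is to prove the ``in fact'' assertion; the headline sentence then follows because DG-projective implies K-projective, and every K-projective complex is K-flat (a standard consequence of adjoint associativity $\overline{\homcomplex}(P,E^+)\cong (E\tensor_R P)^+$ combined with the faithful exactness of Pontryagin duality). The forward direction below exhibits each K-flat complex as isomorphic in $\class{D}_{pur}(R)$ to a DG-projective, while the backward direction uses only that the source is K-flat, hence shows that any complex isomorphic in $\class{D}_{pur}(R)$ to a K-projective complex is K-flat.

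For the ``if'' direction, let $f\colon P\xrightarrow{} X$ be a pure homology isomorphism with $P$ DG-projective. Then $P$ is K-flat and $C:=\textnormal{cone}(f)$ is pure acyclic. For any acyclic chain complex $E$ of right $R$-modules, applying $E\tensor_R -$ to the distinguished triangle $P\xrightarrow{} X\xrightarrow{} C\xrightarrow{}\Sigma P$ in $K(R)$ produces a distinguished triangle in $K(\textnormal{Ab})$. The first vertex $E\tensor_R P$ is acyclic by K-flatness of $P$, and the third vertex $E\tensor_R C$ is acyclic by Proposition~\ref{proposition-pure-acyclic-complexes} applied to the pure acyclic $C$. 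The induced long exact sequence in homology forces $E\tensor_R X$ to be acyclic, so $X\in\class{KF}$.

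For the ``only if'' direction, assume $X\in\class{KF}$ and select a surjective DG-projective resolution $q\colon P\twoheadrightarrow X$; such a $q$ exists by Spaltenstein's construction~\cite{spaltenstein}, giving $P$ DG-projective and $q$ a quasi-isomorphism. To upgrade $q$ to a pure homology isomorphism, one must show $\textnormal{cone}(q)$ is pure acyclic. For any $R^\circ$-module $M$, note $M\tensor_R \textnormal{cone}(q) \cong \textnormal{cone}(M\tensor_R q)$, so the task reduces to proving $M\tensor_R q$ is a quasi-isomorphism. Since $P$ and $X$ are both K-flat, each is its own K-flat resolution, so $M\tensor_R P$ and $M\tensor_R X$ compute the derived tensor products $M\tensor^{\mathbf{L}}_R P$ and $M\tensor^{\mathbf{L}}_R X$. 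Under these identifications $M\tensor_R q$ becomes $M\tensor^{\mathbf{L}}_R q$, which is an isomorphism in $\class{D}(\textnormal{Ab})$ because $q$ is an isomorphism in $\class{D}(R)$ and derived functors preserve isomorphisms. Letting $M$ vary proves $\textnormal{cone}(q)$ is pure acyclic.

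The main obstacle is the ``only if'' direction; specifically, turning an ordinary DG-projective resolution into a pure homology isomorphism. The K-flatness of $X$ is doing all the work: it is precisely what guarantees that $M\tensor_R X$ represents $M\tensor^{\mathbf{L}}_R X$, which is the identification that lets the quasi-isomorphism $q$ survive tensoring against arbitrary right $R$-modules. Without this hypothesis the cone would merely be acyclic, and the derived-tensor argument would collapse.
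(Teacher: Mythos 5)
Your proof is correct, but it takes a genuinely different route from the paper's. The paper argues entirely inside the exact category $\ch_{pur}$: starting from the same epimorphic quasi-isomorphism $f \colon P \xrightarrow{} X$ with $P$ DG-projective, it factors $f = pi$ using the model structure of Theorem~\ref{them-stovicek-pure}, checks that $\ker p$ is an acyclic complex of pure-injectives, and then invokes the cotorsion pair $(\class{KF}, \exclass{PI})$ of Theorem~\ref{them-emman-K-flat} to split the sequence $0 \xrightarrow{} \ker p \xrightarrow{} Y \xrightarrow{} X \xrightarrow{} 0$ (K-flatness of $X$ enters exactly as $\Ext^1_{pur}(X,\ker p)=0$), forcing $\ker p$ to be contractible with pure-injective components, whence $p$, and so $f$, is a pure homology isomorphism; the ``if'' direction is handled by thickness of $\class{KF}$ in $\ch_{pur}$ (Proposition~\ref{prop-C-acyclic-properties}). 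You instead work with mapping cones in the homotopy category: your ``if'' direction is the cone/long-exact-sequence version of that same thickness argument, and your ``only if'' direction shows directly that $\textnormal{cone}(q)$ is pure acyclic, with no factorization and no splitting. Two remarks. First, your derived-tensor detour (each of $M\tensor_R P$, $M\tensor_R X$ computes $M\tensor^{\mathbf{L}}_R$, etc.) can be collapsed to a single citation: $\textnormal{cone}(q)$ is acyclic (since $q$ is a quasi-isomorphism) and K-flat (since $\class{KF}$ is a triangulated subcategory containing $P$ and $X$), hence pure acyclic by Proposition~\ref{proposition-pure-acyclic-complexes}. This makes the argument self-contained in the paper's terms, avoids leaning on Spaltenstein's theory of K-flat resolutions (whose well-definedness rests on precisely the fact you are extracting from it), and sidesteps the small slip in your preamble, where the adjunction should read $\homcomplex(P,E^+)\cong (E\tensor_R P)^+$ with the ordinary total Hom complex rather than $\overline{\homcomplex}$. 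Second, what each approach buys: yours is more elementary, needing only Proposition~\ref{proposition-pure-acyclic-complexes} and the existence of epimorphic DG-projective resolutions --- in particular it never uses the completeness of the cotorsion pairs of Theorems~\ref{them-stovicek-pure} and~\ref{them-emman-K-flat} --- while the paper's proof is designed to display the corollary as a payoff of exactly that cotorsion-pair machinery, staying inside $\ch_{pur}$ without ever invoking derived categories or derived functors.
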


\begin{proof}
Any chain map $f : P \xrightarrow{} X$ is a weak equivalence in the Theorem~\ref{them-stovicek-pure} model structure if and only if $f$ is a pure homology isomorphism.  Write a factorization $f = pi$ where $i: P \xrightarrow{} Y$ is a degreewise pure monic with $\cok{i}$ pure acyclic (trivial cof.), and $p : Y \xrightarrow{} X$ is a degreewise pure epimorphism with $\ker{p}$ a complex of pure-injectives (a fibration).  By the 2 out of 3 axiom, $f$ is a weak equivalence if and only if $p$ is, and this would force $\ker{p}$ to be a contractible complex with pure-injective components. So in this case, since  the class K-Flat is thick in $\ch_{pur}$, we can easily argue that if $P$ (resp. $X$) is any K-flat complex (which includes the possibility of being DG-projective) then $X$ (resp. $P$) must be K-flat. 

On the other hand, suppose $X$ is given to be K-flat. It is standard that we can find an epimorphic homology isomorphism $f : P \xrightarrow{} X$ with $P$ a DG-projective complex (or just K-projective or DG-flat). In any case, $\ker{f}$ is acyclic. One verifies that in any factorization  $f = pi$ as in the start of the last paragraph, $\ker{p}$ is in fact an extension of $\ker{f}$ and $\cok{i}$. It follows that $\ker{p}$ is an acyclic complex of pure-injectives. But then by Theorem~\ref{them-emman-K-flat}, the epimorphism $p$ must split. So  $\ker{p}$ must also be K-flat, as it is a direct summand of $Y$ (which is K-flat because it is a degreewise pure extension of the K-flat complexes $P$ and $\cok{i}$). Hence $\ker{p}$ must be a contractible complex of pure-injectives. It proves $p$ is a pure homology isomorphism, and so $f$ must be too. 
\end{proof}

The author would like to point out that glimpses of Corollary~\ref{cor-K-flat-char} can already be seen in~\cite{emmanouil-relation-K-flatness-K-projectivity}. In particular, it can be deduced by combining~\cite[Lemma~3.1]{emmanouil-relation-K-flatness-K-projectivity} and~\cite[Remark~3.2(ii)]{emmanouil-relation-K-flatness-K-projectivity}. Another closely related characterization of K-flatness is given in~\cite[Proposition~3.3]{emmanouil-relation-K-flatness-K-projectivity}.

\subsection{DG-pure-injective complexes}
We start with a definition and a lemma. 

\begin{definition}
We will say that a chain complex $J$ is \emph{DG-pure-injective} if each $J_n$ is a pure-injective $R$-module and if any chain map $f : E \xrightarrow{} J$ from an exact complex $E$ is null homotopic. 
\end{definition}

Note then that any DG-injective complex in the usual sense is DG-pure-injective. In fact, recall that a DG-injective complex is the same thing as a K-injective complex (in the sense of Spaltenstein~\cite{spaltenstein}) with injective components. In the same way we have the following. 

\begin{lemma}
A chain complex $J$ is DG-pure-injective if and only if it is K-injective and each component $J_n$ is a pure-injective module.
\end{lemma}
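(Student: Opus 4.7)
The plan is to exploit the identity $H_n[\homcomplex(E,J)] = K(R)(E,\Sigma^{-n}J)$ recorded in Section~\ref{sec-preliminaries}. This identity lets us translate between acyclicity of the internal Hom complex (the defining feature of K-injectivity) and the vanishing of chain-homotopy classes of maps out of exact complexes (the defining feature of DG-pure-injectivity), modulo a shift.

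For the direction $(\Leftarrow)$, suppose $J$ is K-injective with pure-injective components. The pure-injective components hypothesis is one of the two requirements for DG-pure-injectivity, so it suffices to check the null-homotopy condition. Given any exact complex $E$, K-injectivity says $\homcomplex(E,J)$ is acyclic, hence $H_0[\homcomplex(E,J)] = K(R)(E,J) = 0$, which means every chain map $E \to J$ is null-homotopic.

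For the direction $(\Rightarrow)$, suppose $J$ is DG-pure-injective. I need $\homcomplex(E,J)$ to be acyclic for every exact $E$, i.e., $K(R)(E,\Sigma^{-n}J) = 0$ for all $n \in \Z$. The key observation is that the class of DG-pure-injective complexes is stable under shifts: each $(\Sigma^{-n}J)_k = J_{k-n}$ is still pure-injective, and a chain map $f : E \to \Sigma^{-n}J$ amounts (via an obvious natural correspondence, up to sign) to a chain map $\Sigma^{n}E \to J$ from the still-exact complex $\Sigma^{n}E$, so the null-homotopy condition is preserved. Applying the DG-pure-injectivity hypothesis to the shifted complex $\Sigma^{-n}J$ gives $K(R)(E,\Sigma^{-n}J) = 0$ as required.

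There is no real obstacle here; the proof is essentially bookkeeping around the cited identity. The only point requiring a remark is the shift-invariance of DG-pure-injectivity used in the forward direction, but this falls out immediately from the definition.
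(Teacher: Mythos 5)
Your proof is correct and follows essentially the same route as the paper's: both translate K-injectivity into the vanishing of $K(R)(E,\Sigma^{-n}J)$ via the identity $H_n[\homcomplex(E,J)] = K(R)(E,\Sigma^{-n}J)$ and use closure of exact complexes (equivalently, of DG-pure-injectives) under shift. The only difference is that you spell out the shift bookkeeping that the paper's proof treats as immediate.
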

 
 \begin{proof}
 The definition of DG-pure-injective is equivalent to the statement that each $J_n$ is a pure-injective and $\homcomplex(E,J)$ is exact for all exact complexes $E$. This is equivalent to saying each $J_n$ is pure-injective and the morphism set $K(R)(E,J)=0$ for all exact complexes $E$. This means $J$ is K-injective, by definition.  
 \end{proof}
 
 \begin{corollary}\label{cor-DG-pure-injective-model}
 Let $\class{E}$ denote the class of all exact (acyclic) chain complexes, and let $\class{F}$ denote the class of all DG-pure-injective complexes.
Then $$\mathfrak{M}^{inj}_{\textnormal{der}} = (All, \class{E}, \class{F})$$ is a cofibrantly generated abelian model structure on the exact category $\ch_{pur}$. Its homotopy category is $\class{D}(R)$, the usual derived category of $R$.
\end{corollary}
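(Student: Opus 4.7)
The plan is to apply Theorem~\ref{them-C-acyclic-models} to the class $\class{C} := \{\,S^n(R) \,|\, n \in \Z\,\}$. Since $S^n(R) \tensor_R W \cong \Sigma^n W$, a complex $W$ is $\class{C}$-acyclic precisely when it is acyclic in the usual sense, so ${}_\class{C}\class{W} = \class{E}$. Theorem~\ref{them-C-acyclic-models} thereby produces a cofibrantly generated abelian model structure $(All, \class{E}, \rightperp{\class{E}})$ on $\ch_{pur}$ at no extra cost, and the only substantive task that remains is to identify $\rightperp{\class{E}}$ with the class $\class{F}$ of DG-pure-injective complexes.

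For this identification I would proceed as follows. Since $\class{A}_{pur} \subseteq \class{E}$, Stovicek's Theorem~\ref{them-stovicek-pure} yields the easy inclusion $\rightperp{\class{E}} \subseteq \rightperp{\class{A}_{pur}} = \dwclass{PI}$, so any fibrant $Y$ already has pure-injective components. The key step is then to observe that whenever $Y$ has pure-injective components, any degreewise pure short exact sequence $0 \to Y \to X \to E \to 0$ must split in each degree (by the defining lifting property of pure-injectives), while conversely every degreewise split short exact sequence is automatically degreewise pure. Hence for such $Y$ the group $\Ext^1_{pur}(E, Y)$ collapses to the group of degreewise split extensions of $E$ by $Y$, which in turn is isomorphic to $K(R)(E, \Sigma^{-1} Y)$ by the classical classification. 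Using that suspension preserves acyclicity, this shows that $Y \in \rightperp{\class{E}}$ if and only if each $Y_n$ is pure-injective and $K(R)(E, Y) = 0$ for every acyclic $E$, i.e., $Y$ is K-injective. By the lemma preceding this corollary, this is exactly the statement that $Y \in \class{F}$. The reverse inclusion $\class{F} \subseteq \rightperp{\class{E}}$ runs back along the same chain of isomorphisms.

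Finally, because the trivial class of $\mathfrak{M}^{inj}_{\textnormal{der}}$ is $\class{E}$ by construction, the weak equivalences are the usual quasi-isomorphisms, and so $\textnormal{Ho}(\mathfrak{M}^{inj}_{\textnormal{der}})$ is the localization of $\ch$ at the quasi-isomorphisms, namely $\class{D}(R)$. The main anticipated obstacle is the translation $\Ext^1_{pur}(E, Y) \cong K(R)(E, \Sigma^{-1} Y)$ in the presence of pure-injective components; once one is convinced that pure-injective components force degreewise pure sequences into $Y$ to split in each degree, the rest is the familiar classification of degreewise split extensions by morphisms in $K(R)$.
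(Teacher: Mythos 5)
Your proposal is correct and takes exactly the paper's route: the paper likewise applies Theorem~\ref{them-C-acyclic-models} with $\class{C} = \{S^n(R)\}$, observes ${}_\class{C}\class{W} = \class{E}$, and then simply asserts that $\class{F} = \rightperp{\class{E}}$ is ``easy to check.'' The verification you supply — degreewise pure extensions into a complex with pure-injective components are degreewise split, hence classified by morphisms in $K(R)$ into a suspension of $Y$, with the direction of suspension immaterial since $\class{E}$ is closed under both $\Sigma$ and $\Sigma^{-1}$ — is precisely the intended detail, and it is sound.
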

 
 \begin{proof}
Taking $\class{C} = \{S^0(R)\}$ in Theorem~\ref{them-C-acyclic-models}, then ${}_\class{C}\class{W}$ becomes the class of all $\{S^0(R)\}$-acyclic complexes, which coincides with the class $\class{E}$ of all exact complexes. (If one wants $\class{C}$ be closed under suspensions, then simply take $\class{C} = \{S^n(R)\}$, where $n$ ranges through $\Z$. Then still ${}_\class{C}\class{W} =\class{E}.$) 
It is easy to check that $\class{F} = \rightperp{\class{E}}$, in $\ch_{pur}$. Indeed to see $\rightperp{\class{E}}\subseteq \class{F}$, let us suppose $X \in \rightperp{\class{E}}$ and $M$ is any $R$-module. Then we have $$0 = \Ext^1_{pur}(D^n(M),X) \cong \Ext^1_{\class{P}ur}(M,X_n)$$
by~\cite[Lemma~4.2(1)]{gillespie-G-derived}. This means each $X_n$ is pure-injective. One can then use~\cite[Lemma~4.3]{gillespie-G-derived} to conclude that  $\homcomplex(E,X)$ is exact for all exact complexes $E$. Therefore,  $X \in \class{F}$. Similar arguments will prove the reverse containment, $\class{F} \subseteq \rightperp{\class{E}}$.
So now we see that the result follows from Theorem~\ref{them-C-acyclic-models}.
 \end{proof}
 
 
 \section{The K-flat model for the derived category, recollement, and compact generation}
 
 We now show that we have a K-flat model structure for the usual derived category $\class{D}(R)$. This model structure is a sort of cousin to the one from~\cite{gillespie}, and it too is a monoidal model structure.
 
Again we let $\class{KF}$ denote the class of all K-flat complexes and $\dwclass{PI}$ the class of all chain complexes that are pure-injective in each degree. We let $\class{E}$ denote the class of all exact (acyclic) complexes. Recall that $\ch_{pur}$ denotes the category of all chain complexes of $R$-modules along with the degreewise pure exact structure. 
 
 \begin{theorem}\label{theorem-derived-cat}
 Let $R$ be any ring and $\class{D}(R)$ denote its derived category. 
 There is a cofibrantly generated abelian model structure on the exact category $\ch_{pur}$ represented by the Hovey triple $$\mathfrak{M}^{flat}_{\textnormal{der}} = (\class{KF},\class{E},\dwclass{PI}).$$
We have triangulated category equivalences
$$\class{D}(R) \cong \textnormal{Ho}(\mathfrak{M}^{flat}_{\textnormal{der}}) \cong K(\class{PI}) \cap K\textnormal{-Flat},$$
that is, $\class{D}(R)$ is equivalent to the chain homotopy category of all K-flat complexes with pure-injective components.

Assuming $R$ is commutative, the model structure is monoidal with respect to the usual tensor product of chain complexes. 
 \end{theorem}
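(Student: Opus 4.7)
The plan is to apply Hovey's correspondence between abelian model structures on $\ch_{pur}$ and compatible pairs of complete cotorsion pairs. For the Hovey triple $(\class{KF},\class{E},\dwclass{PI})$, one needs $\class{E}$ thick in $\ch_{pur}$ and both $(\class{KF}\cap\class{E},\dwclass{PI})$ and $(\class{KF},\class{E}\cap\dwclass{PI})$ complete cotorsion pairs there. Thickness of $\class{E}$ is immediate from Proposition~\ref{prop-C-acyclic-properties} once one observes $\class{E}={}_{\{S^n(R)\}}\class{W}$. By Proposition~\ref{proposition-pure-acyclic-complexes}, $\class{KF}\cap\class{E}=\class{A}_{pur}$, so the first pair is Stovicek's Theorem~\ref{them-stovicek-pure}. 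Since $\class{E}\cap\dwclass{PI}=\exclass{PI}$, the second is exactly Emmanouil's Theorem~\ref{them-emman-K-flat}. Cofibrant generation follows because both cotorsion pairs are cogenerated by sets, by Theorem~\ref{them-C-acyclic-cot-pair}.

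To identify the homotopy category, I first check that the weak equivalences are the usual quasi-isomorphisms. Given any $f\colon X\to Y$, factor $f=pi$ with $i$ a cofibration and $p$ a trivial fibration. Since $\ker{p}\in\dwclass{PI}\cap\class{E}=\exclass{PI}\subseteq\class{E}$, the map $p$ is automatically a quasi-isomorphism. By 2-out-of-3 on homology, $f$ is a quasi-isomorphism iff $i$ is, iff $\cok{i}$ is acyclic. Since $\cok{i}\in\class{KF}$, Proposition~\ref{proposition-pure-acyclic-complexes} gives $\cok{i}\in\class{KF}\cap\class{E}=\class{A}_{pur}$, so $i$ is a trivial cofibration, so $f$ is a weak equivalence. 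The universal property argument at the end of the proof of Theorem~\ref{them-emman-K-flat} then identifies $\textnormal{Ho}(\mathfrak{M}^{flat}_{\textnormal{der}})$ with the Verdier quotient $K(R)/\class{E}=\class{D}(R)$. The second equivalence with $K(\class{PI})\cap K\textnormal{-Flat}$ follows because the fibrant-cofibrant objects are precisely $\class{KF}\cap\dwclass{PI}$, and formal homotopy collapses to chain homotopy as in the last paragraph of the proof of Theorem~\ref{them-C-acyclic-models}.

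For the monoidal statement with $R$ commutative, the plan is to verify the pushout-product and unit axioms for $\otimes_R$. The unit axiom is free: $S^0(R)\in\class{KF}$ is cofibrant, so its cofibrant replacement may be taken to be itself and the structure map is an identity. For the pushout-product axiom, the standard cofibrantly generated reduction asks for (i) $\class{KF}\otimes_R\class{KF}\subseteq\class{KF}$ and (ii) $\class{A}_{pur}\otimes_R\class{KF}\subseteq\class{A}_{pur}$, together with preservation of degreewise pure monomorphisms under $\otimes_R$ (the latter holds because a pure exact sequence of modules stays pure exact after tensoring with any module). Condition (i) holds because for acyclic $E$ and $X,Y\in\class{KF}$, associativity gives $E\otimes_R(X\otimes_R Y)\cong(E\otimes_R X)\otimes_R Y$, acyclic by two applications of K-flatness. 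Condition (ii) holds because if $X\in\class{A}_{pur}$ and $Y\in\class{KF}$, then for any complex $M$, using commutativity we have $M\otimes_R(X\otimes_R Y)\cong X\otimes_R(M\otimes_R Y)$, which is acyclic by Proposition~\ref{proposition-pure-acyclic-complexes}. The main technical obstacle I anticipate is verifying that these closure properties genuinely suffice for the pushout-product axiom within the exact category $\ch_{pur}$ rather than the ambient abelian category $\ch$, which amounts to tracking that each step of the pushout-product construction yields admissible morphisms in the degreewise pure exact structure.
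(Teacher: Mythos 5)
Your proposal is correct and takes essentially the same approach as the paper: Hovey's correspondence applied to the two cotorsion pairs of Theorem~\ref{them-stovicek-pure} and Theorem~\ref{them-emman-K-flat}, matched up via Proposition~\ref{proposition-pure-acyclic-complexes}(3) and the identity $\class{E}\cap\dwclass{PI}=\exclass{PI}$, with monoidality verified through the same four closure conditions. The ``technical obstacle'' you anticipate in the pushout-product verification is precisely what \cite[Theorem~7.2]{hovey} resolves (and what the paper cites): that theorem is stated for abelian model structures compatible with a proper class of short exact sequences, so it applies directly to $\ch_{pur}$, and your conditions (i), (ii), preservation of degreewise pure monomorphisms, and cofibrancy of the unit are exactly its hypotheses (a)--(d).
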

 
\begin{proof}
It follows immediately from Proposition~\ref{proposition-pure-acyclic-complexes}(3), Theorem~\ref{them-stovicek-pure} and Theorem~\ref{them-emman-K-flat} that we have the stated abelian model structure. It is cofibrantly generated because each cotorsion pair is cogenerated by a set. It is a model for the derived category because the trivial objects are precisely the acyclic complexes.

As pointed out in Section~\ref{subsection-cot-model}, for abelian categories we need not distinguish between an exact model structure in the sense of~\cite{gillespie-exact model structures} and an abelian model structure compatible with a proper class of short exact sequences as in~\cite{hovey}. So we use~\cite[Theorem~7.2]{hovey} to show that the model structure is monoidal, for a commutative ring $R$. We need to check conditions (a), (b), (c), and (d) from that theorem. 

Condition (a) translates to the statement that (for any chain complex $X$) if we apply $X \otimes_R -$ to a degreewise pure exact sequence then we still have a degreewise pure exact sequence. As shown in the proof of Proposition~\ref{prop-C-acyclic-properties}, it will definitely result in a short exact sequence. In fact, in each degree it is a direct sum of pure short exact sequences (of tensor products) of $R$-modules. Indeed for any $R$-module $M$, the functor $M \otimes_R -$ preserves pure exact sequences, by associativity of tensor products. Moreover, pure exact sequences are closed under direct sums. So condition (a) follows. 

Condition (b) translates to the statement that if $X$ and $Y$ are each K-flat, then $X \otimes_R Y$ too is K-flat. It follows right from the definition of K-flat and associativity of the tensor product. 

Condition (c) translates to the statement that  if $X$ is K-flat and $Y$ pure acyclic, then $X \otimes_R Y$ is also pure acyclic. This too is easily verified with Proposition~\ref{proposition-pure-acyclic-complexes}.

Finally, condition (d) translates to the statement that the unit $S^0(R)$ is a K-flat complex. Clearly it is. This completes the proof that the model structure is monoidal.
\end{proof}
 
 It follows from~\cite[Prop.~1.5.3]{becker} that the above model structure for $\class{D}(R)$ is the right Bousfield localization of Stovicek's model $\mathfrak{M}_{\textnormal{pur}}$ by the model $\mathfrak{M}_{\class{KF}}$. In fact, we have the following recollement. 

\begin{theorem}\label{theorem-recollement}
We have a recollement of compactly generated categories
\[
\xy
(-28,0)*+{\class{D}_{\textnormal{K-flat}}(R)};
(0,0)*+{\class{D}_{pur}(R)};
(25,0)*+{\class{D}(R)};
{(-19,0) \ar (-10,0)};
{(-10,0) \ar@<0.5em> (-19,0)};
{(-10,0) \ar@<-0.5em> (-19,0)};
{(10,0) \ar (19,0)};
{(19,0) \ar@<0.5em> (10,0)};
{(19,0) \ar@<-0.5em> (10,0)};
\endxy
\] which when restricting the first two categories to fibrant objects becomes 
\[
\xy
(-28,0)*+{K_{ac}(\class{PI})};
(0,0)*+{K(\class{PI})};
(25,0)*+{\class{D}(R)};
{(-19,0) \ar (-10,0)};
{(-10,0) \ar@<0.5em> (-19,0)};
{(-10,0) \ar@<-0.5em> (-19,0)};
{(10,0) \ar (19,0)};
{(19,0) \ar@<0.5em> (10,0)};
{(19,0) \ar@<-0.5em> (10,0)};
\endxy
.\] 
\end{theorem}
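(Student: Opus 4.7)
The plan is to package the three abelian model structures already established on $\ch_{pur}$ -- namely $\mathfrak{M}_{\textnormal{pur}} = (All, \class{A}_{pur}, \dwclass{PI})$, $\mathfrak{M}_{\class{KF}} = (All, \class{KF}, \exclass{PI})$, and $\mathfrak{M}^{flat}_{\textnormal{der}} = (\class{KF}, \class{E}, \dwclass{PI})$ -- into a single recollement. By Proposition~\ref{proposition-pure-acyclic-complexes} these three Hovey triples satisfy the intertwining compatibilities $\class{KF} \cap \class{E} = \class{A}_{pur}$ and $\exclass{PI} = \class{E} \cap \dwclass{PI}$, which are precisely what one needs for a recollement of Hovey triples in the style of~\cite{gillespie-recoll2}.

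Three of the six required functors come for free from the right Bousfield localization already mentioned just before the theorem: by~\cite[Prop.~1.5.3]{becker}, $\mathfrak{M}^{flat}_{\textnormal{der}}$ is the right Bousfield localization of $\mathfrak{M}_{\textnormal{pur}}$ by $\mathfrak{M}_{\class{KF}}$. Passing to homotopy categories yields a Bousfield localization sequence $\class{D}_{\textnormal{K-flat}}(R) \hookrightarrow \class{D}_{pur}(R) \to \class{D}(R)$ in which the embedding is fully faithful with a right adjoint, and the Verdier quotient admits a fully faithful left adjoint given by K-flat cofibrant replacement in $\mathfrak{M}^{flat}_{\textnormal{der}}$.

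To promote this localization sequence to an actual recollement, I would construct the remaining two adjoints from the complete cotorsion pair $(\class{KF}, \exclass{PI})$ of Theorem~\ref{them-emman-K-flat}. For $X \in \ch_{pur}$ the two approximation sequences $0 \to A \to K \to X \to 0$ and $0 \to X \to A' \to K' \to 0$, with $K,K' \in \class{KF}$ and $A,A' \in \exclass{PI}$, descend to $\class{D}_{pur}(R) \cong K(\class{PI})$ and supply the missing left adjoint to the embedding $\class{D}_{\textnormal{K-flat}}(R) \hookrightarrow \class{D}_{pur}(R)$ and the missing right adjoint to the Verdier quotient $\class{D}_{pur}(R) \to \class{D}(R)$, respectively. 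Alternatively one can invoke the general principle that a Bousfield localization of a compactly generated triangulated category at a compactly generated localizing subcategory is automatically a recollement.

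For the compact generation claim, each of the three model structures is cofibrantly generated on the locally presentable category $\ch_{pur}$, so by~\cite{rosicky-brown representability combinatorial model srucs} all three homotopy categories are well generated; compact generation of the middle term $\class{D}_{pur}(R) \cong K(\class{PI})$ is known, and it transfers through the recollement to both outer terms. For the restricted diagram, I would replace each of the first two categories by its full subcategory of fibrant objects: by Theorem~\ref{them-stovicek-pure} and Theorem~\ref{them-emman-K-flat} these are identified with $K(\class{PI})$ and $K_{ac}(\class{PI})$, respectively, and the six adjoints restrict accordingly. The main obstacle is the production of the \emph{second} pair of adjoints -- equivalently, verifying that the embedding $\class{D}_{\textnormal{K-flat}}(R) \hookrightarrow \class{D}_{pur}(R)$ is a smashing Bousfield localization rather than merely a Bousfield localization; this is where the cotorsion-pair approximation (or, alternatively, the compact-generation argument via Brown representability) carries the real content of the proof.
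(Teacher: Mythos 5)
Your overall strategy (assemble the three model structures, then transfer compact generation) is the right one, but there is a genuine gap exactly where you locate ``the real content'': the two \emph{right} adjoints are never actually produced. First, the attribution to Becker is backwards: a right Bousfield localization yields the \emph{co}localization half of the recollement, i.e.\ the embedding $i_*\colon \class{D}_{\textnormal{K-flat}}(R) \cong K_{ac}(\class{PI}) \hookrightarrow K(\class{PI})$ acquires a \emph{left} adjoint and the quotient $j^*\colon K(\class{PI}) \to \class{D}(R)$ acquires a fully faithful \emph{left} adjoint (both come from the identity Quillen adjunctions, using $\class{A}_{pur} = \class{KF}\cap\class{E}$). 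What is missing are the right adjoint $i^!$ of the embedding and the right adjoint $j_*$ of the quotient, and your approximation sequences for the cotorsion pair $(\class{KF},\exclass{PI})$ cannot supply them. Indeed, for $X \in \dwclass{PI}$ the sequence $0 \to X \to A' \to K' \to 0$ is degreewise split (a degreewise pure monomorphism out of pure-injectives splits in each degree), hence gives a triangle in $K(R)$, and Emmanouil's orthogonality $\Hom_{K(R)}(\class{KF}, K_{ac}(\class{PI})) = 0$ shows that $X \mapsto A'$ is precisely the \emph{reflection} onto $K_{ac}(\class{PI})$, i.e.\ the left adjoint yet again; it cannot be the right adjoint $j_*$ of the quotient, because $A'$ is acyclic, so $j^*(A')=0$, whereas $j_*$ must satisfy $j^*j_*\cong\mathrm{id}$. (The other sequence likewise only yields left-adjoint data: a K-flat precover is cofibrant-replacement data for $j_!$.) The functors $i^!$ and $j_*$ require approximations by the class right orthogonal to $K_{ac}(\class{PI})$ inside $K(\class{PI})$, namely the DG-pure-injective complexes; that is, you need the \emph{third} injective cotorsion pair $(\class{E},\class{F})$ of Corollary~\ref{cor-DG-pure-injective-model}, which never appears in your argument. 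The paper's proof consists exactly of feeding the three injective cotorsion pairs $(\class{A}_{pur},\dwclass{PI})$, $(\class{KF},\exclass{PI})$, and $(\class{E},\class{F})$ into the machine of~\cite[Theorem~4.6]{gillespie-recollement}; that third pair is the essential ingredient your proof lacks.

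Your fallback principle does not close the gap either. Compact generation of the localizing subcategory \emph{in its own right} does not upgrade a Bousfield localization to a recollement: the standard results require generation by objects that are compact in the \emph{ambient} category (a smashing localization), or else a Brown-representability argument using well-generation together with the nontrivial fact that $i_*$ preserves coproducts --- a fact which itself rests on Proposition~\ref{proposition-pure-acyclic-complexes}, since one must know that the relevant cokernels, being acyclic and K-flat, are pure acyclic. Worse, as deployed the argument is circular: you would invoke compact generation of $K_{ac}(\class{PI}) \cong \class{D}_{\textnormal{K-flat}}(R)$ to build the recollement, yet in the paper (and in your own final paragraph) compact generation of this category is a \emph{consequence} of the recollement, obtained by pushing compact generators of $K(\class{PI}) \cong \class{D}_{pur}(R)$ through the left adjoint of the inclusion. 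That transfer argument is fine, but it can only be run after the recollement exists.
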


\begin{proof}
The recollement follows from~\cite[Theorem~4.6]{gillespie-recollement}:  We have three injective cotorsion pairs $$\mathfrak{M}_{pur} = (\class{A}_{pur},\dwclass{PI}) \ , \ \ \ \ \mathfrak{M}_{\class{KF}} = (\class{KF},\exclass{PI}) \ , \ \ \ \ \mathfrak{M}_{\class{E}} = (\class{E}, \class{F}),$$
coming respectively from Theorem~\ref{them-stovicek-pure}, Theorem~\ref{them-emman-K-flat}, and Corollary~\ref{cor-DG-pure-injective-model}.
 They satisfy the hypotheses of~\cite[Theorem~4.6]{gillespie-recollement} so we have a recollement. 

Let us see why all three triangulated homotopy categories are compactly generated. First, it is well-known that $\class{D}(R)$ is compactly generated. Perhaps it is less well-known, but $\class{D}_{pur}(R)$ too is compactly generated. Both of these statements can be seen to be special cases of~\cite[Corollary~4.7(7)]{gillespie-G-derived} ($\class{D}(R)$ is the derived category with respect to the generator $G = R$, while $\class{D}_{pur}(R)$ is the derived category with respect to a representative set $\{G_i\}$ of \emph{all} finitely presented $R$-modules $G_i$.)

It follows that $\class{D}_{\textnormal{K-flat}}(R)$ is compactly generated. Indeed it is enough to show that the equivalent $K_{ac}(\class{PI})$ is compactly generated. But we now have that the inclusion $I : K_{ac}(\class{PI}) \xrightarrow{} K(\class{PI})$ has both a left and a right adjoint. In particular, note that $I$ preserves coproducts and that $K(\class{PI})$ is compactly generated, as it is equivalent to $\class{D}_{pur}(R)$. Now it is an exercise to check that the left adjoint of $I$ carries a set of compact weak generators for $K(\class{PI})$ to a set of compact weak generators for $K_{ac}(\class{PI})$.
\end{proof} 
 
 \section*{Acknowledgements}
 
 The author would like to express his sincere gratitude to Ioannis Emmanouil for carefully reading this manuscript. His comments and suggestions greatly improved the article and clarified some concepts for the author.


\begin{thebibliography}{9}
\bibitem[Bec14]{becker} H.~Becker, \emph{Models for singularity
categories}, Adv. Math. vol.~254, 2014, pp.~187--232.

 \bibitem[B\"uh10]{buhler-exact categories}
T.~B\"uhler, \emph{Exact Categories}, Expo. Math. vol.~28, no.~1, 2010, pp.~1--69.

\bibitem[CFH]{christensen-foxby-holm-book}
Lars Winther Christensen, Hans-Bjørn Foxby, and Henrik Holm, \emph{Derived Category Methods
in Commutative Algebra}, textbook draft in progress: https://www.math.ttu.edu/~lchriste/book.html.

\bibitem[Emm16]{emmanouil-pure-acyclic-complexes}
 Ioannis Emmanouil, \emph{On pure acyclic complexes},
J. Algebra vol. 465, 2016, pp.~190--213.

 \bibitem[Emm19]{emmanouil-relation-K-flatness-K-projectivity}
 Ioannis Emmanouil, \emph{On the relation between K-flatness and K-projectivity}, J. Algebra vol. 517, 2019, pp.~320--335.
 
\bibitem[Emm21]{emmanouil-K-flatness-and-orthogonality-in-homotopy-cats}
 Ioannis Emmanouil, \emph{K-flatness and orthogonality in homotopy categories}, 2021 preprint at http://users.uoa.gr/~emmanoui/research.html.
 
\bibitem[EGR97]{enochs-garcia-rozas}
E.~Enochs, J.R. Garc\'{i}a Rozas, \emph{Tensor products of chain
complexes}, Math J. Okayama Univ. 39, 1997, pp. 19-42.

 \bibitem[EJ00]{enochs-jenda-book}
 E.~Enochs and O.~Jenda, \emph{Relative homological algebra}, de
 Gruyter Expositions in Mathematics vol. 30, Walter de Gruyter, New
 York, 2000.
 
\bibitem[EGO17]{estrada-gillespie-odabasi}
Sergio Estrada, James Gillespie, and Sinem Odabasi, \emph{Pure exact structures and the pure derived category of a scheme}, Math. Proc. Cambridge Philos. Soc. vol.~163, no.~2, 2017, pp.~251--264.

\bibitem[GR99]{garcia-rozas}
J.~R.~Garc\'\i{}a-Rozas, \emph{Covers and envelopes in the
category of complexes of modules}, Research Notes in Mathematics
no. 407, Chapman \& Hall/CRC, Boca Raton, FL, 1999.

 \bibitem[Gil04]{gillespie}
 James Gillespie, \emph{The flat model structure on Ch(R)}, Trans.
 Amer. Math. Soc. vol.~356, no.~8, 2004, pp.~3369--3390.
 
\bibitem[Gil11]{gillespie-exact model structures}
James Gillespie, \emph{Model structures on exact categories}, Journal of Pure and Applied Algebra, vol.~215, no.~12, 2011, pp.~2892--2902.

 \bibitem[Gil16a]{gillespie-G-derived}
 James Gillespie, \emph{The derived category with respect to a generator}, Ann. Mat. Pura Appl. (4) vol.~195, no.~2, 2016, pp.~371--402.
 
\bibitem[Gil16b]{gillespie-recollement}
James Gillespie, \emph{Gorenstein complexes and recollements from cotorsion pairs}, Adv. Math. vol.~291, 2016, pp.~859--911.

\bibitem[Gil16c]{gillespie-recoll2}
 James Gillespie, \emph{Exact model structures and recollements}, Journal of Algebra vol.~458, 2016, pp.265--306.
 
\bibitem[Gil16d]{gillespie-hereditary-abelian-models}
James Gillespie, \emph{Hereditary abelian model categories}, Bulletin of the London Mathematical Society vol. 48, no. 6, 2016, pp.895--922.

\bibitem[Gil17b]{gillespie-ding-modules}
James Gillespie, \emph{On Ding injective, Ding projective and Ding flat modules and complexes},
Rocky Mountain J. Math. vol.~47, no.~8, 2017, pp.~2641--2673.

\bibitem[GT06]{trlifaj-book}
R{\"u}diger G{\"o}bel and Jan Trlifaj, \emph{Approximations and Endomorphism Algebras of Modules},
  de Gruyter Expositions in Mathematics, vol.~41, Walter de Gruyter \& Co.,  Berlin, 2006.
  
 \bibitem[Hov99]{hovey-model-categories}
 Mark Hovey, \emph{Model categories}, Mathematical Surveys and
 Monographs vol.~63, American Mathematical Society, 1999.
 
 \bibitem[Hov02]{hovey}
 Mark Hovey, \emph{Cotorsion pairs, model category structures,
 and representation theory}, Mathematische Zeitschrift, vol.~241,
 553-592, 2002.

\bibitem[Kel90]{keller-exact-cats}
B. Keller, \emph{Chain complexes and stable categories},
Manuscripta Math. vol.~67, no.~4, 1990, pp.~379--417.

\bibitem[Kra10]{krause-localization theory for triangulated categories}
Henning Krause, \emph{Localization theory for triangulated categories}, In T. Holm, P. Jørgensen, and R. Rouquier (Eds.), Triangulated Categories (London Mathematical Society Lecture Note Series, pp. 161-235). Cambridge University Press, Cambridge, 2010.
 
\bibitem[Nee01]{neeman-well generated}
Amnon Neeman, \emph{Triangulated categories}, Annals of Mathematics Studies vol.~148, Princeton University Press, Princeton, NJ, 2001.
  
  \bibitem[Nee01]{neeman-book}
Amnon Neeman, \emph{Triangulated categories}, Annals of Mathematics Studies vol.~148, Princeton University Press, 2001.

\bibitem[Ros05]{rosicky-brown representability combinatorial model srucs}
J.~Rosick\'y, \emph{Generalized Brown representability in homotopy categories}, Theory Appl. Categ. vol.~14, no.~19, 2005, pp.~451--479.

\bibitem[Qui73]{quillen-algebraic K-theory}
D.~Quillen, \emph{Higher Algebraic K-theory I}, SLNM vol.~341, Springer-Verlag, 1973, pp.~85-147.

 \bibitem[Spa88]{spaltenstein} N.~Spaltenstein, \emph{Resolutions
 of unbounded complexes}, Compos. Math., vol.~65, no.~2, 1988,
 pp.~121-154.
 
\bibitem[Sto15]{stovicek-purity}
Jan {\v{S}}\v{t}ov{\'{\i}}{\v{c}}ek, \emph{On purity and applications to coderived and singularity categories},  arXiv:1412.1615.

 \bibitem[Wei94]{weibel}
 Charles A. Weibel, \emph{An Introduction to Homological Algebra},
 Cambridge Studies in Advanced Mathematics vol.~38, Cambridge
 University Press, 1994.
 \end{thebibliography}
\end{document}